\def \RR {\mathbb R}
\def \ZZ {\mathbb Z}
\def \DD {\mathbb D}
\def \MBS {\mathbb S}
\def \eps {\varepsilon}
\newtheorem{theorem}{Theorem}[section]
\newtheorem{definition}[theorem]{Definition}
\newtheorem{lemma}[theorem]{Lemma}
\newtheorem{proposition}[theorem]{Proposition}
\def\myffrac#1#2 in #3{\raise 2.6pt\hbox{$#3 #1$}\mkern-1.5mu\raise 0.8pt\hbox{$
		#3/$}\mkern-1.1mu\lower 1.5pt\hbox{$#3 #2$}}
\def\qed{\hfill $\vcenter{\hrule height .3mm
		\hbox {\vrule width .3mm height 2.1mm \kern 2mm \vrule width .3mm
			height 2.1mm} \hrule height .3mm}$ \bigskip}
\begin{document}

\title{A quantitative approach to the regularity of a Riemannian surface}
\author{Matan Eilat}
\date{}
\maketitle

\begin{abstract}
We introduce two definitions with the purpose of quantifying the concept of a $C^{2,\alpha}$ surface for $0 < \alpha < 1$.
The intrinsic definition is given in terms of the $\alpha$-H\"{o}lder norm of the Gauss curvature function. 
The extrinsic one relies on the existence of a smooth local representation of the Riemannian metric. 
We show that these definitions are equivalent up to a constant depending on $\alpha$.
\end{abstract}

\section{Introduction}

The simplest geometric spaces in two dimensions are probably the simply-connected manifolds of constant Gauss curvature: The sphere, the hyperbolic plane and the Euclidean plane.
Here we consider a more inclusive class of surfaces in which the curvature is not necessarily constant, yet it does not vary too wildly. More precisely, we are concerned with Riemannian surfaces whose Gauss curvature function is $\alpha$-H\"{o}lder continuous for some $0 < \alpha < 1$.
According to DeTurck and Kazdan \cite{DK} any such surface is $C^{2,\alpha}$-smooth, meaning that each point is covered by some coordinate chart in which the metric takes the form
$$
g = \sum_{i,j=1}^{2}g_{ij}dx^{i}dx^{j} ,
$$
where $g_{ij}$ are $C^{2,\alpha}$-smooth functions.
We quantify this statement and its converse by giving two definitions of a surface being {\it $\alpha$-regular} at a given point to a certain distance.

\medskip
Let $M$ be a $C^{2}$-smooth Riemannian surface and $0< \alpha \leq 1$. We write $d_{g}$ for the Riemannian distance in $M$, and $K:M \rightarrow \RR$ for the Gauss curvature function.
For a point $p_{0} \in M$ and $\delta > 0$ we write $B = B_{M}(p_{0}, \delta)$ for the Riemannian disc of radius $\delta$ around $p_{0}$. 
We define the {\it non-dimensional} $C^{0,\alpha}$-norm of $K$ in $B$  by
$$
\left\Vert K \right\Vert_{0,\alpha; B} = 
\sup_{p \in B} |K(p)|  + 
(2\delta)^{\alpha} \sup_{p \neq q \in B}\frac{|K(p)-K(q)|}{d_{g}^\alpha(p,q)}.
$$

\begin{definition} 
We say that $M$ is intrinsically $\alpha$-regular at a point $p_{0} \in M $ to distance $\delta > 0$ if the injectivity radius of all points of $B = B_{M}(p_{0},\delta)$ is at least $2\delta$, and 
$$
\delta^{2}\cdot \left\Vert K \right\Vert_{0,\alpha;B} \leq 1.
$$
We define the intrinsic $\alpha$-regularity radius of $M$ at a point $p_{0} \in M$ to be the supremum of all such $\delta$, and denote it by $\rho_{\text{int}} = \rho_{\text{int}}(p_{0},\alpha)$.
\label{def_1.1}
\end{definition}

\medskip
For an open set $U \subseteq \RR^{2}$ we write $g_{0} = \sum_{i,j=1}^{2} \delta_{ij}dx^{i}dx^{j}$ for the Euclidean metric on $U$, where $\delta_{ij}$ stands for the Kronecker delta function.
We write $d_{g_{0}}$ for the Euclidean distance in $U$. For a non-negative integer $k$ and a function $f \in C^{k}(U)$ we set
$$
\left\Vert D^{k}f \right\Vert_{0;U} = 
\max_{|\beta| = k} \left\Vert D^{\beta}f \right\Vert_{0;U} =
\max_{|\beta| = k} \sup_{x \in U} \left| D^{\beta}f(x)\right|,
$$
and
$$	
\left[ D^{k}f \right]_{\alpha;U} = 
\max_{|\beta| = k} \left[ D^{\beta}f \right]_{\alpha;U} =
\max_{|\beta| = k} \sup_{x \neq y \in U} \frac{\left| D^{\beta}f(x) - D^{\beta}f(y)\right|}{d^{\alpha}_{g_{0}}(x,y)},
$$
where $\beta$ is a two-dimensional multi-index. Assuming that $U$ is bounded with 
$d = \text{diam}(U) = \sup_{x,y \in U} d_{g_{0}}(x,y)$
we define the non-dimensional norms
$$
\left\Vert f \right\Vert_{k;U} = 
\sum_{j=0}^{k} d^{j} \left\Vert D^{j}f \right\Vert_{0;U}
\qquad \text{and} \qquad
\left\Vert f \right\Vert_{k,\alpha;U} =
\left\Vert f \right\Vert_{k;U} + 
d^{k+\alpha} \left[ D^{k}f \right]_{\alpha;U}.
$$

\begin{definition} 
We say that $M$ is extrinsically $\alpha$-regular at a point $p_{0}\in M$ to distance $\delta > 0$ if the injectivity radius of all points of $B = B_{M}(p_{0},\delta)$ is at least $2\delta$, and $B$ is isometric to a bounded open set $U \subseteq \RR^{2}$ endowed with the Riemannian metric 
$ g = \sum_{i,j=1}^{2}g_{ij}dx^{i}dx^{j}$, 
such that $g_{ij}:U \rightarrow \RR$ satisfy 
\begin{equation}
g_{ij}(x_{0}) = \delta_{ij}
\qquad \text{and} \qquad
\left\Vert g_{ij} - \delta_{ij} \right\Vert_{2,\alpha;U} \leq 1 \qquad \text{for any }\;i,j\in \{ 1,2 \},
\label{eq_1829}
\end{equation}
where $x_{0} \in U$ is the image of $p_{0}$ under the implied isometry.
We define the extrinsic $\alpha$-regularity radius of $M$ at a point $p_0\in M$ to be the supremum of all such $\delta$, and denote it by $\rho_{\text{ext}} = \rho_{\text{ext}}(p_{0},\alpha)$.
\label{def_2345}
\end{definition}

Theorem \ref{thm_1} and Theorem \ref{thm_2} below imply that the two definitions for the $\alpha$-regularity radius are equivalent up to a constant depending on $\alpha$.
Loosely speaking, this means that it is possible to switch between the intrinsic and the extrinsic points of view while only scaling the radius by some constant factor.

\begin{theorem}
Let $M$ be a $C^{2}$-smooth Riemannian surface, let $p_{0}\in M$ and $0< \alpha \leq 1$. Then
$$
\rho_{\text{int}} \geq C_{1} \cdot \rho_{\text{ext}},
$$
for some universal constant $C_{1} > 0$.
\label{thm_1}
\end{theorem}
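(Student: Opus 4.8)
The plan is to fix an arbitrary $0 < \delta < \rho_{\text{ext}}$, so that $M$ is extrinsically $\alpha$-regular at $p_0$ to distance $\delta$, and to prove that it is then intrinsically $\alpha$-regular to distance $\delta' = C_1\delta$ for a suitable small universal constant $C_1 \in (0,1]$; letting $\delta \uparrow \rho_{\text{ext}}$ then yields the theorem. Since $\delta' \leq \delta$, the disc $B' = B_M(p_0,\delta')$ is contained in $B = B_M(p_0,\delta)$, so the injectivity radius at every point of $B'$ is at least $2\delta \geq 2\delta'$ and the injectivity-radius requirement of Definition \ref{def_1.1} holds automatically; it remains to establish $(\delta')^2 \left\Vert K \right\Vert_{0,\alpha;B'} \leq 1$. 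To this end I would transport everything to the Euclidean model $(U,g)$ furnished by Definition \ref{def_2345} and invoke the Brioschi formula, which exhibits $K$ as a rational function $K = F(g_{ij}, \partial_k g_{ij}, \partial_k\partial_l g_{ij})$ whose numerator is a polynomial in the $g_{ij}$ and their first two derivatives and whose denominator is $(\det g)^2$.

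The heart of the matter is a comparison between the Euclidean and Riemannian geometries on $U$, which cannot be carried out globally since \eqref{eq_1829} only forces $|g_{ij} - \delta_{ij}| \leq 1$. That $C^0$ bound does give the one-sided estimate $g \leq 3\, g_0$ as quadratic forms, hence $d_g \leq \sqrt{3}\, d_{g_0}$ on $U$; combined with the injectivity-radius hypothesis, which guarantees points of $U$ at Riemannian distance arbitrarily close to $\delta$ from $x_0$, this shows $d = \operatorname{diam}(U) \geq \delta/\sqrt{3}$, and a connectedness argument along Euclidean segments issuing from $x_0$ shows that the Euclidean ball $B_{g_0}(x_0, c_0\delta) = \{x : d_{g_0}(x,x_0) < c_0\delta\}$ lies in $U$ for a universal $c_0 > 0$. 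Feeding the gradient bound $\left\Vert D g_{ij} \right\Vert_{0;U} \leq 1/d$ from \eqref{eq_1829} into the mean value theorem along these segments gives $|g_{ij}(x) - \delta_{ij}| \leq \sqrt{2}\,d_{g_0}(x,x_0)/d$ there, so after shrinking to a radius $c_1\delta$ (with $c_1 \leq c_0$ universal) we obtain $|g_{ij} - \delta_{ij}| \leq 1/2$. On this smaller ball $g$ and $g_0$ are uniformly comparable, $\det g$ is bounded below by a universal constant, and $d_g$ and $d_{g_0}$ agree up to a factor of $\sqrt{2}$; choosing $C_1$ small and running one more connectedness argument then forces $B' \subseteq B_{g_0}(x_0, c_1\delta)$.

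On $B'$ the curvature estimates become routine. With $(\det g)^2$ bounded below, the bounds $\left\Vert D^2 g_{ij} \right\Vert_{0;U} \leq 1/d^2$ and $\left\Vert D g_{ij} \right\Vert_{0;U} \leq 1/d$ give $\sup_{B'}|K| \lesssim 1/d^2 \lesssim 1/\delta^2$, while the chain rule for H\"older seminorms, together with the dominant bound $\left[ D^2 g_{ij} \right]_{\alpha;U} \leq 1/d^{2+\alpha}$ and the elementary estimates $[g_{ij}]_{\alpha;B'} \lesssim (\operatorname{diam} B')^{1-\alpha}/d$ and $[\partial_k g_{ij}]_{\alpha;B'} \lesssim (\operatorname{diam} B')^{1-\alpha}/d^{2}$, gives $[K]_{\alpha;B'} \lesssim 1/d^{2+\alpha} \lesssim 1/\delta^{2+\alpha}$ with respect to $d_{g_0}$, and hence also with respect to $d_g$ by the distance comparison. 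Consequently $\left\Vert K \right\Vert_{0,\alpha;B'} = \sup_{B'}|K| + (2\delta')^\alpha [K]_{\alpha;B'} \lesssim 1/\delta^2$, so that
$$ (\delta')^2 \left\Vert K \right\Vert_{0,\alpha;B'} \lesssim (C_1\delta)^2 \cdot \delta^{-2} = C_1^2, $$
which is at most $1$ once $C_1$ is a small enough universal constant. I would take care throughout that the constants are independent of $\alpha$: the factors $(\operatorname{diam} B')^{1-\alpha}$ produced by the H\"older seminorms of $g_{ij}$ and its gradient always recombine with the prefactor $(2\delta')^\alpha$ into clean powers of $\delta'/d$, which is a universal multiple of $C_1$, so no $\alpha$-dependence survives.

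The step I expect to be the main obstacle is the geometric comparison of the second paragraph. Because \eqref{eq_1829} provides only weak ($C^0$) control of $g - g_0$ and gives no convexity or shape information about $U$, one must bootstrap from the one-sided bound $g \leq 3 g_0$ and the gradient bound to carve out a genuinely Euclidean-like neighborhood of $x_0$ of size proportional to $\delta$, controlling both inclusions $B_{g_0}(x_0,\cdot) \subseteq U$ and $B' \subseteq B_{g_0}(x_0,\cdot)$ by connectedness rather than by any a priori geometry of $U$. Once this neighborhood is secured, the curvature bounds and the final calibration of $C_1$ are mechanical.
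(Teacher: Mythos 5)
Your proposal is correct in its essentials, but it follows a genuinely different route from the paper's. The paper first passes to an $\eps$-restricted chart (Proposition \ref{prop_chart_def_scale}) by truncating Riemannian geodesics emanating from $p_0$, so that the metric coefficients are within $\eps = 0.01$ of $\delta_{ij}$ in the non-dimensional $C^{2,\alpha}$-norm on the whole (smaller) chart domain; it then proves that the Riemannian disc is strongly convex via Whitehead's theorem (Lemma \ref{lem_delta_curv_bounded}) and, by a boundary-acceleration/Laplacian-comparison argument, that the chart domain $U$ itself is convex in the Euclidean sense (Lemma \ref{lem_U_convex}); only then does it run the H\"{o}lder-norm algebra globally on $U$ (Proposition \ref{prop_bound_curv_alpha_norm}). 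You avoid all of this geometric machinery by localizing to a small Euclidean ball around $x_0$: the nearest-boundary-point segment argument (the same computation as \eqref{eq_delta_d}) gives $B_{g_0}(x_0,\delta/\sqrt{3}) \subseteq U$ and $d \gtrsim \delta$; the gradient bound plus the mean value theorem makes $g$ uniformly close to $g_0$ on a smaller concentric ball; and a first-exit argument confines $B'$ to that ball, which is convex by fiat, so the same H\"{o}lder algebra (curvature as a rational expression in $g_{ij}$, $Dg_{ij}$, $D^2 g_{ij}$, product rules for seminorms, MVT for the lower-order seminorms) applies there. Your route is more elementary --- no Whitehead theorem, no Laplacian comparison, no convexity of $U$ --- at the cost of producing the curvature bound only on a small concentric ball rather than on essentially the whole chart; that is all the theorem needs, while the paper's intermediate results (strong convexity of $B$, convexity of $U$) are stronger statements in their own right. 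Your closing remark on $\alpha$-independence of the constants is also correct and necessary, since the theorem asserts a universal $C_1$.

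Two details to tighten. First, the threshold $|g_{ij}-\delta_{ij}| \leq 1/2$ does \emph{not} give two-sided comparability: from $\max_{i,j}|g_{ij}-\delta_{ij}| \leq c$ one only gets $g \geq (1-2c)\,g_0$ as quadratic forms, and $c = 1/2$ permits the degenerate example $g_{11}=g_{22}=1/2$, $g_{12}=g_{21}=-1/2$. Shrink to $c = 1/4$, say, and your factor-$\sqrt{2}$ distance comparison survives. Second, $d_g(p,q)$ for $p,q \in B'$ is the distance in $M$, and near-minimizing paths between such points may leave $B'$; they do, however, stay in $B_M(p_0,3\delta')$, so the first-exit argument should be run with this factor of $3$ of slack (i.e.\ demand $3\sqrt{2}\,C_1 < c_1$) in order to justify the lower bound $d_g \gtrsim d_{g_0}$ that you use when converting $[K]_{\alpha;B'}$ from Euclidean to Riemannian distances. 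Both are one-line fixes that do not affect the structure of your argument.
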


\begin{theorem}
Let $M$ be a $C^{2}$-smooth Riemannian surface, let $p_{0}\in M$ and $0< \alpha < 1$. Then
$$
\rho_{\text{ext}} \geq C_{2} \cdot \rho_{\text{int}},
$$
for some constant $C_{2} > 0$ depending on $\alpha$.
\label{thm_2}
\end{theorem}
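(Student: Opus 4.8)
The plan is to produce isothermal (conformal) coordinates on a definite sub-disc of $B$ and then to upgrade their regularity by means of the Gauss curvature equation. Since both definitions are scale invariant, I would first rescale the metric so that $\rho_{\text{int}}(p_{0},\alpha) = 1$, and fix $\delta$ slightly below $1$. Intrinsic $\alpha$-regularity then provides injectivity radius at least $2\delta$ throughout $B = B_{M}(p_{0},\delta)$, together with $\sup_{B}|K| \leq \delta^{-2}$ and $(2\delta)^{\alpha}\sup_{p\neq q\in B}|K(p)-K(q)|/d_{g}^{\alpha}(p,q) \leq \delta^{-2}$. The goal is to exhibit, on a metric ball $B_{M}(p_{0},c\delta)$ with $c>0$ depending only on $\alpha$, an isometry onto a Euclidean domain $U$ in which $g_{ij}$ satisfies the normalization and non-dimensional bound \eqref{eq_1829}; this yields $\rho_{\text{ext}} \geq c\,\delta$ and hence the theorem with $C_{2}=c$.

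First I would fix a background chart by the exponential map $\Phi = \Exp_{p_{0}}\colon B_{\RR^{2}}(0,c_{0}\delta)\to M$; since the injectivity radius exceeds $2\delta$ this is a diffeomorphism onto its image, and Jacobi field comparison using $\sup_{B}|K|\leq\delta^{-2}$ shows that the pulled-back metric $\tilde g = \Phi^{*}g$ is $C^{0}$-close to $g_{0}$, with closeness controlled by $c_{0}^{2}$. Within this chart I would introduce a conformal structure: either by solving the Beltrami equation $w_{\bar z} = \mu\, w_{z}$, whose coefficient $\mu$ is determined algebraically by $\tilde g$ and satisfies $\|\mu\|_{\infty}\ll 1$ and $\mu\in C^{0,\alpha}$, or equivalently by solving the Dirichlet problem $\Delta_{\tilde g}u = 0$ with boundary data close to a linear coordinate and passing to the harmonic conjugate $v$ via $dv = \star_{\tilde g}\,du$. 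Either route yields a $C^{1,\alpha}$ diffeomorphism $\Psi$ from a Euclidean domain $U$ onto a sub-disc of $B$ with $\Psi^{*}g = e^{2\varphi}g_{0}$ for some $\varphi\in C^{0,\alpha}$; the smallness of $\mu$ guarantees that $U$ is nondegenerate, that its image contains a ball of radius comparable to $c_{0}\delta$, and that $\varphi$ is bounded.

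With isothermal coordinates in hand, the Gauss curvature of $e^{2\varphi}g_{0}$ is $-e^{-2\varphi}\Delta_{0}\varphi$, so $\varphi$ solves the semilinear elliptic equation $\Delta_{0}\varphi = -(K\circ\Psi)\,e^{2\varphi}$ on $U$. As $K\in C^{0,\alpha}$ and $\varphi$ is already $C^{0,\alpha}$, the right-hand side lies in $C^{0,\alpha}$, and the interior Schauder estimate promotes $\varphi$ to $C^{2,\alpha}$ on a slightly smaller domain, with a bound on $\|\varphi\|_{2,\alpha;U}$ that is small when $c$ is small. Subtracting the constant $\varphi(x_{0})$ — equivalently rescaling the Euclidean coordinates by $e^{\varphi(x_{0})}$ — normalizes $\varphi(x_{0})=0$, so that $g_{ij}=e^{2\varphi}\delta_{ij}$ satisfies $g_{ij}(x_{0})=\delta_{ij}$; and since $\|e^{2\varphi}-1\|_{2,\alpha;U}\to 0$ as $c\to 0$, shrinking $c$ to a value depending on $\alpha$ forces $\|g_{ij}-\delta_{ij}\|_{2,\alpha;U}\leq 1$. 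The injectivity radius condition is inherited from $B$ because $c\delta<\delta$, so $M$ is extrinsically $\alpha$-regular to distance $c\delta$, and letting $\delta\uparrow\rho_{\text{int}}$ gives $\rho_{\text{ext}}\geq c\,\rho_{\text{int}}$.

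I expect the main obstacle to be the quantitative construction of the isothermal chart of definite size with accompanying control on $\varphi$: one must ensure the conformal map does not degenerate and that its image covers a ball of radius proportional to $\delta$, uniformly over all admissible metrics, which is precisely where the smallness of the Beltrami coefficient (a consequence of the curvature bound and the choice of $c_{0}$) is used. The restriction $\alpha<1$ enters exactly at the Schauder step, since a merely Lipschitz right-hand side does not yield $C^{2,1}$ regularity; moreover the Schauder constant, and hence $C_{2}$, degenerates as $\alpha\to 1$, which is why the constant is permitted to depend on $\alpha$.
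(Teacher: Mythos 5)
Your overall strategy is the same as the paper's: pass to isothermal coordinates on a sub-disc, use Liouville's equation $K=-\Delta(\log\varphi)/(2\varphi)$ together with Schauder estimates to get $C^{2,\alpha}$ control of the conformal factor, normalize its value at the center by a dilation, and conclude extrinsic regularity on a disc of radius $c\,\delta$. The difference is in how the isothermal chart is produced and controlled, and this is exactly where your argument has a genuine gap. The paper does not construct the chart ab initio: it invokes Theorem \ref{thm_isothermal} (Corollary 1.2 of the companion paper \cite{E}), which supplies an isothermal chart $z:\overline{B}\to\delta\overline{\DD}$ mapping boundary to boundary \emph{together with} the quantitative bounds $\sup_B|\log\varphi|\leq 8\delta^2\kappa$ and bi-Lipschitz control $e^{-4\delta^2\kappa}\leq d_g/d_{g_0}\leq e^{4\delta^2\kappa}$. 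These two bounds are what make the Schauder bootstrap in Proposition \ref{prop_schauder_corollary} start: the estimate $\Vert\varphi\Vert^*_{2,\alpha}\lesssim\Vert\varphi\Vert_0+\Vert K\varphi\Vert^{(2)}_{0,\alpha}$ is useless without an a priori bound on $\Vert\log\varphi\Vert_0$ of size $\delta^2\Vert K\Vert_0$, and the comparison of Hölder norms $\Vert K\Vert_{0,\alpha;B}\eqsim\Vert K\Vert_{0,\alpha;\delta\DD}$ is useless without the bi-Lipschitz statement. In your proposal these facts are asserted ("the smallness of $\mu$ guarantees \dots that $\varphi$ is bounded") and then flagged as "the main obstacle," but never established; yet they are the crux — the paper's author devoted a separate paper to them.

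Two concrete technical problems in your sketched route. First, regularity of the background chart: for a metric that is only $C^2$-smooth, the exponential map is only $C^1$, so the pulled-back metric $\tilde g=\Exp_{p_0}^*g$ is a priori merely continuous; your claim that the Beltrami coefficient $\mu$ lies in $C^{0,\alpha}$ with a seminorm controlled (with the correct scaling) by $\delta^2\Vert K\Vert_{0,\alpha;B}$ requires a quantitative Jacobi-field analysis that you do not outline — this is the DeTurck--Kazdan phenomenon that normal coordinates lose regularity, and it is precisely why the paper never touches exponential coordinates. Second, the smallness mechanism: the final requirement is $\Vert g_{ij}-\delta_{ij}\Vert_{2,\alpha;U}\leq 1$ in the non-dimensional norm, and your claim that this tends to $0$ as $c\to 0$ implicitly uses a \emph{bounded} interior Schauder estimate on a chart of definite size $c_0\delta$, which again requires the unproven $C^0$ bound on $\varphi$ (and Hölder bound on $\mu$) as input; without it the argument is circular, since the Schauder estimate needs $\Vert\varphi\Vert_0$ and the smallness of $\Vert\varphi\Vert_0$ (after normalization) needs the gradient bound from the Schauder estimate. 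So the proposal is a correct high-level plan, but the step that constitutes the real mathematical content of Theorem \ref{thm_2} — a quantitative isothermal chart of definite size with controlled conformal factor — is missing.
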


\medskip
Note that in Theorem \ref{thm_1} we allow $\alpha = 1$.
However, in order to prove Theorem \ref{thm_2} we use tools from harmonic analysis which do not apply for $\alpha = 1$. In particular, as part of the proof we use an {\it isothermal coordinate chart}. It therefore follows from Theorem \ref{thm_1} and Theorem \ref{thm_2} that if the metric satisfies certain $C^{2,\alpha}$-estimates in some coordinate chart on a disc of radius $\delta > 0$, then it satisfies the same $C^{2,\alpha}$-estimates in an isothermal coordinate chart on a disc of radius $C \delta$ for some constant $C > 0$ depending on $\alpha$.
This fact may be thought of as a quantitative evidence for the superior regularity of isothermal coordinates, as shown in \cite{DK}.

\medskip
The above definitions have a natural global extension: we say that a $C^{2}$-smooth Riemannian surface $M$ is $\alpha$-regular if 
$ \rho(M,\alpha) = \inf_{p \in M} \rho(p,\alpha) > 0$, and define 
$$
\Vert M \Vert_{\alpha} = \rho^{-2}(M,\alpha).
$$
One may choose to use either the intrinsic or the extrinsic point of view, as they are equivalent according to Theorem \ref{thm_1} and Theorem \ref{thm_2}. It is not too difficult to see that $\Vert M \Vert_{\alpha} = 0$ if and only if $M$ is isometric to the Euclidean plane, and that $\Vert \cdot \Vert_{\alpha}$ is homogeneous of degree $1$ with respect to a scale of the Riemannian metric.
We hope that this quantitative approach to the $\alpha$-regularity of a Riemannian surface can be useful for several research directions, for example in the areas of surface fitting and approximation theory.

\medskip
In the $n$-dimensional case there is a notion of a similar flavour to ours called the {\it harmonic radius}. 
Roughly speaking, the $C^{k,\alpha}$-harmonic radius at $p_{0} \in M$ is the largest radius $\delta > 0$ such that there exists a harmonic coordinate chart on the geodesic ball $B_{M}(p_{0},\delta)$ in which the metric tensor is $C^{k,\alpha}$-controlled. Lower bounds on the $C^{1,\alpha}$-harmonic radius in terms of the injectivity radius and the Ricci curvature were given by Anderson in the context of Gromov-Hausdorff convergence of manifolds \cite{An}. For other estimates and more details we refer to \cite{HH, ZZ}.

\medskip
{\it Acknowledgements.} I would like to express my deep gratitude to my advisor, Prof. Bo'az Klartag, for his continuous guidance, assistance and encouragement.
Supported by the Adams Fellowship Program of the Israel Academy of Sciences and Humanities and the Israel Science Foundation (ISF).

\section{Curvature estimates via metric coefficients}
\label{sec_claim_1}

\medskip
The first step in the proof of Theorem \ref{thm_1} is to allow ourselves to work with metrics whose coefficients satisfy tighter bounds on their $C^{2,\alpha}$-norm.
For this purpose we introduce the following auxiliary definition which replaces the bound in condition (\ref{eq_1829}) from Definition \ref{def_2345} by some $\eps > 0$. We will not use arbitrary values of $\eps$, but merely a constant value which is small enough for our needs. Hence from now on we fix
$$
\eps = 0.01.
$$
We use the Einstein summation convention, and all the indices are taken implicitly from the set $\{1,2\}$ unless stated otherwise. With a slight abuse of notation, we sometimes do not distinguish between a point $p \in B$ and its image in $U$ under the implied isometry.

\begin{definition}
We say that $M$ is $\alpha$-regular via an $\eps$-restricted coordinate chart at a point $p_{0}\in M$ to distance $\delta > 0$ if the injectivity radius of all points of $B = B_{M}(p_{0},\delta)$ is at least $2\delta$, and $B$ is isometric to a bounded open set $U \subseteq \RR^{2}$ endowed with the Riemannian metric 
$ g = g_{ij}dx^{i}dx^{j}$, such that $g_{ij}:U \rightarrow \RR$ satisfy 
$$
g_{ij}(p_{0}) = \delta_{ij} 
\qquad \text{and} \qquad
\left\Vert g_{ij} - \delta_{ij} \right\Vert_{2,\alpha;U} \leq \eps \qquad \text{for all }\;i,j\in \{ 1,2 \}.
$$
We define the $\alpha$-regularity radius via an $\eps$-restricted coordinate chart of $M$ at a point $p_0\in M$ to be the supremum over all such $\delta$, and denote it by $\rho_{\eps-\text{chart}} = \rho_{\eps-\text{chart}}(p_{0},\alpha)$.
\label{def_2345_scaled}
\end{definition} 

\medskip
Let $g = g_{ij}dx^{i}dx^{j}$ be a Riemannian metric on an open set $U \subseteq \RR^{2}$. 
When considering a certain point $x \in U$ we write $\Vert \cdot \Vert_{g} = \sqrt{g(\cdot,\cdot)}$ for the Riemannian norm on the tangent space $T_{x}U$. 
We denote the Riemannian length of a rectifiable path $\gamma \subseteq U$ by $L_{g}(\gamma)$.
Suppose that $(U,g)$ is isometric to the Riemannian disc $B = B_{M}(p_{0},\delta)$.
The radius $\delta$ of the disc $B$ can then be related to the Euclidean diameter $d$ of the set $U$ in terms of 
$c = \max_{i,j} \Vert g_{ij} - \delta_{ij} \Vert_{0;U}$ as follows:
Let $q \in \partial{U}$ be such that 
$d_{g_{0}}(p_{0}, q) = \inf_{p \in \partial{U}} d_{g_{0}}(p_{0}, p)$, where $d_{g_{0}}$ stands for the Euclidean distance in $U$.
Let $\gamma(t)=(1-t)p_{0} + tq$ be the line connecting $p_{0}$ and $q$.
Then $\gamma([0,1)) \subseteq U$ and 
\begin{equation}
	\delta = 
	d_{g}(p_{0},q) \leq
	L_{g}(\gamma) \leq 
	\sqrt{1+2c} \cdot L_{g_{0}}(\gamma) = 
	\sqrt{1+2c} \cdot d_{g_{0}}(p_{0},q) \leq 
	\sqrt{1+2c} \cdot d.
	\label{eq_delta_d}
\end{equation}

\medskip
The following proposition implies that in order to show Theorem \ref{thm_1} it suffices to consider a metric which satisfies the assumptions of Definition \ref{def_2345_scaled}. 

\begin{proposition}
Let $M$ be a $C^{2}$-smooth Riemannian surface, let  $p_{0}\in M$ and $0< \alpha \leq 1$. Then
$$
\rho_{\eps-\text{chart}} \geq \frac{\eps}{14} \cdot \rho_{\text{ext}}.
$$
\label{prop_chart_def_scale}
\end{proposition}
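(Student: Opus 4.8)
The plan is to take an extrinsic chart on a ball and simply restrict it to a smaller concentric ball, exploiting the fact that the non-dimensional $C^{2,\alpha}$-norm of $g_{ij}-\delta_{ij}$ shrinks when we pass to a smaller domain. Fix $\delta < \rho_{\text{ext}}$, so that $B = B_M(p_0,\delta)$ is isometric to some $(U,g)$ with $\Vert g_{ij}-\delta_{ij}\Vert_{2,\alpha;U}\le 1$ and $g_{ij}(x_0)=\delta_{ij}$, and set $\delta' = (\eps/14)\delta$. I would consider the concentric ball $B' = B_M(p_0,\delta')\subseteq B$ together with its image $U'\subseteq U$ under the same isometry, and show that $(U', g|_{U'})$ is an $\eps$-restricted chart for $B'$; this yields $\rho_{\eps-\text{chart}}\ge \delta' = (\eps/14)\delta$ for every $\delta<\rho_{\text{ext}}$, and hence the proposition. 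The injectivity-radius requirement is immediate, since points of $B'\subseteq B$ have injectivity radius at least $2\delta\ge 2\delta'$, and the normalization $g_{ij}(x_0)=\delta_{ij}$ is inherited verbatim.

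The heart of the matter is the estimate $\Vert g_{ij}-\delta_{ij}\Vert_{2,\alpha;U'}\le\eps$. Write $f = g_{ij}-\delta_{ij}$, $d=\text{diam}(U)$ and $d'=\text{diam}(U')$, and note that $\Vert f\Vert_{2,\alpha;U}\le 1$ gives the pointwise bounds $\Vert Df\Vert_{0;U}\le d^{-1}$, $\Vert D^2 f\Vert_{0;U}\le d^{-2}$ and $[D^2 f]_{\alpha;U}\le d^{-2-\alpha}$. The three homogeneous terms of the norm on $U'$ restrict trivially and acquire favorable powers of $r := d'/d\le 1$:
$$
d'\Vert Df\Vert_{0;U'} \le r, \qquad (d')^{2}\Vert D^2 f\Vert_{0;U'}\le r^{2}, \qquad (d')^{2+\alpha}[D^2 f]_{\alpha;U'}\le r^{2+\alpha}.
$$
For the zeroth-order term I would use $f(x_0)=0$: integrating $Df$ along the radial $g$-geodesic $\gamma$ from $x_0$ to a point $x\in U'$ (which stays inside $B'$ by the injectivity-radius hypothesis) gives $\sup_{U'}|f|\le \Vert Df\Vert_{0;U}\cdot L_{g_0}(\gamma)$, and $L_{g_0}(\gamma)$ is controlled by $d_g(x_0,x)<\delta'$ through a lower bound $g\ge(1-2c')g_0$ on $U'$, where $c'=\max_{ij}\sup_{U'}|g_{ij}-\delta_{ij}|$.

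It remains to turn these bounds into a clean inequality in $\delta'$ and $\delta$. On one hand, (\ref{eq_delta_d}) together with $c\le 1$ gives $d\ge \delta/\sqrt{3}$; on the other hand, the same lower bound $g\ge(1-2c')g_0$ shows the radial geodesics have Euclidean length at most $\delta'/\sqrt{1-2c'}$, whence $d'\le 2\delta'/\sqrt{1-2c'}$ and $\sup_{U'}|f|\le \delta'/(d\sqrt{1-2c'})$. Substituting $\delta'=(\eps/14)\delta$, each of the four terms becomes a multiple of $\eps$ with an explicit constant assembled from $\sqrt{3}$, the factor $2$ above, and $\sqrt{1-2c'}$; the value $14$ is chosen precisely so that their sum does not exceed $\eps$. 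Collecting the estimates then gives $\Vert f\Vert_{2,\alpha;U'}\le \eps$, as required.

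The one genuine subtlety is that the lower bound $g\ge(1-2c')g_0$, used to pass between Euclidean and Riemannian distances on $U'$, is only useful once we know that $c'$ is small — yet our estimate for $c'=\sup_{U'}|f|$ itself relies on that very bound. I would resolve this mild circularity by a continuity (bootstrap) argument: the quantity $c'(\delta')=\max_{ij}\sup_{B_M(p_0,\delta')}|g_{ij}-\delta_{ij}|$ is continuous and nondecreasing in $\delta'$ and vanishes at $\delta'=0$, so the set of radii on which, say, $c'\le 1/4$ is a closed subinterval of $[0,(\eps/14)\delta]$; on that subinterval one has $\sqrt{1-2c'}\ge 1/\sqrt{2}$, and the estimate above then forces $c'\le C\eps$ with $C\eps\ll 1/4$, making the subinterval open as well, hence all of $[0,(\eps/14)\delta]$. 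This is the step I expect to require the most care, although it is standard; once it is in place, the constant-tracking of the previous paragraph is entirely routine.
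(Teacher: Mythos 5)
Your proposal is correct, and its skeleton coincides with the paper's proof: both restrict the given chart to the concentric Riemannian ball of radius $(\eps/14)\delta$, inherit the normalization $g_{ij}(x_0)=\delta_{ij}$ and the injectivity-radius condition for free, control the first- and second-order terms and the H\"older seminorm purely by the shrinkage of the diameter, and obtain the zeroth-order term from the mean value theorem along radial geodesics (note that you dropped the factor $\sqrt{2}$ relating $\max_{|\beta|=1}|D^{\beta}f|$ to the gradient norm; it is harmless for the final constant, but the paper keeps it). The genuine difference lies exactly at the point you flag as the subtlety: to convert between Riemannian and Euclidean lengths on the small ball you need $c'=\max_{i,j}\sup_{U'}|g_{ij}-\delta_{ij}|$ to be small there, which is part of what you are proving, and you close this circle with a monotonicity-plus-continuity (clopen) bootstrap in the radius. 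That bootstrap does work, but it carries extra technical burden you would have to discharge: continuity of $s\mapsto c'(s)$, which in turn uses compactness of closed balls inside the injectivity radius and continuity of $g_{ij}$ up to the boundary. The paper avoids the bootstrap entirely by reversing the order of operations: for each $q\in\partial U$ it truncates the $g$-minimizing geodesic $\gamma_q$ from $p_0$ at the first parameter $t_q$ where its \emph{Euclidean} length reaches $\eps\delta/(8\sqrt{3})$. Because the truncation is by Euclidean length, the mean value theorem gives $|g_{ij}-\delta_{ij}|\leq \eps/(4\sqrt{2})$ on $\gamma_q([0,t_q])$ with no metric comparison needed; only afterwards is this smallness used to deduce $L_{g}(\gamma_q([0,t_q]))\geq 0.99\, L_{g_0}(\gamma_q([0,t_q]))\geq \eps\delta/14$, so the image of $B_{M}(p_0,\eps\delta/14)$ is contained in the union of these segments, on which all bounds are already in hand. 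The truncation device buys a shorter, fully direct argument; your continuity bootstrap is a standard and valid substitute, so the difference is one of economy rather than correctness.
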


\begin{proof}
Suppose that $M$ is extrinsically $\alpha$-regular at $p_{0} \in M$ to distance $\delta > 0$.
For $q \in \partial{U}$ let $\gamma_{q} : [0, \delta] \to U$ be the minimizing geodesic with respect to the metric $g$ such that $\gamma_{q}(0) = p_{0}$ and $\gamma_{q}(\delta) = q$.
Since $\max_{i,j} \Vert g_{ij} - \delta_{ij} \Vert_{0;U} \leq 1$ we have that 
$ L_{g_{0}}(\gamma) \geq \delta / \sqrt{3}$. Hence there exists $0 < t_{q} < \delta$ such that 
\begin{equation}
L_{g_{0}}(\gamma_{q}([0,t_{q}])) = \frac{\eps\delta} {8\sqrt{3}} \leq \frac{\eps d}{8},
\label{eq_1105}
\end{equation}
where $d$ is the Euclidean diameter of $U$, and the inequality follows from (\ref{eq_delta_d}).
According to our assumptions we have that 
$ \max_{i,j}\Vert D^{1}g_{ij} \Vert_{0;U} \leq d^{-1} $.
Thus by the mean value theorem, for any $x \in \gamma_{q}([0,t_{q}])$ we have
\begin{equation}
\max_{i,j} |g_{ij}(x) - \delta_{ij}| \leq \sqrt{2} d^{-1} \cdot \frac{\eps d}{8} = \frac{\eps}{4\sqrt{2}}.
\label{eq_1755}
\end{equation}

\medskip
It follows from (\ref{eq_1105}) and (\ref{eq_1755}) that  
$ L_{g}(\gamma_{q}([0,t_{q}])) \geq 
0.99 \cdot L_{g_{0}}(\gamma_{q}([0,t_{q}]) \geq \eps \delta /14$.
The Riemannian disc $B_{M}( p_{0}, \eps \delta / 14 )$ is therefore isometric to a set $V \subseteq \bigcup_{q \in \partial{U}} \gamma_{q}([0,t_{q}]) $ endowed with the metric $g$. 
Moreover, it follows from (\ref{eq_1105}) that  $d_{g_{0}}(p_{0}, \gamma_{q}(t_{q})) \leq \eps d / 8$ for any $q \in \partial{U}$. Writing $d_{V}$ for the Euclidean diameter of $V$ we obtain that $d_{V} \leq \eps d/4$. 
Thus for any $i,j \in \{1,2\}$,
$$
d_{V}^{k} \left\Vert D^{k}g_{ij} \right\Vert_{0;V} \leq (\eps / 4)^{k} 
\quad \text{for any } k\in \{1,2\}
\qquad \text{and} \qquad
d_{V}^{2+\alpha}[D^{2}g_{ij}]_{\alpha;V} \leq (\eps / 4)^{2+\alpha}.
$$
Furthermore, using (\ref{eq_1755}) we see that $ \max_{i,j} \Vert g_{ij}-\delta_{ij} \Vert_{0;V} \leq \eps / 4$, so that for any $i,j \in \{1,2\}$,
$$
\left\Vert g_{ij} - \delta_{ij} \right\Vert_{2,\alpha;V} = \left( \sum_{k=0}^{2}d_{V}^{k} \left\Vert D^{k}(g_{ij}-\delta_{ij}) \right\Vert_{0;V} \right) + d_{V}^{2+\alpha}[D^{2}g_{ij}]_{\alpha;V} \leq \epsilon,
$$
and the proof is completed.

\end{proof}

\medskip
From now on and until the end of this section, we fix $0 < \alpha \leq 1$ and assume that $M$ is $\alpha$-regular via an $\eps$-restricted coordinate chart at a point $p_{0} \in M$ to distance $\delta > 0$. We follow the notation of Definition \ref{def_2345_scaled}, and write $d$ for the Euclidean diameter of $U$.
The following lemma implies that under these assumptions the  Riemannian disc $B$ is strongly convex, i.e. for any two points $p$ and $q$ in the closure $\overline{B}$ there is a unique minimizing geodesic in $M$ from $p$ to $q$, and the interior of this geodesic is contained in $B$. 

\begin{lemma} 
We have that $\delta^{2} \sup_{B} | K | < 0.1$ and $B$ is strongly convex.
\label{lem_delta_curv_bounded}
\end{lemma}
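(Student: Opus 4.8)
The plan is to establish the two assertions in order: first the curvature bound $\delta^{2}\sup_{B}|K| < 0.1$, and then to derive strong convexity from it by comparison geometry. For the curvature bound I would start from the classical coordinate expression $|K| = |R_{1212}|/\det g$, where the single independent component of the Riemann tensor splits into a part linear in the second derivatives of the metric coefficients and a part quadratic in the Christoffel symbols,
$$
R_{1212} = \tfrac{1}{2}\bigl(\partial_{1}^{2}g_{22} + \partial_{2}^{2}g_{11} - 2\,\partial_{1}\partial_{2}g_{12}\bigr) + g_{mn}\bigl(\Gamma^{m}_{11}\Gamma^{n}_{22} - \Gamma^{m}_{12}\Gamma^{n}_{12}\bigr).
$$

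First I would unwind the hypothesis $\Vert g_{ij}-\delta_{ij}\Vert_{2,\alpha;U}\le\eps$ into the pointwise bounds $|g_{ij}-\delta_{ij}|\le\eps$, $|\partial_{k}g_{ij}|\le\eps/d$ and $|\partial_{k}\partial_{l}g_{ij}|\le\eps/d^{2}$ on $U$. The second-derivative part of $R_{1212}$ is then at most $2\eps/d^{2}$, since the absolute values of its coefficients sum to exactly $2$. The inverse metric obeys $g^{kl}=\delta^{kl}+O(\eps)$ and $\det g\ge 1-2\eps-\eps^{2}$, whence each Christoffel symbol satisfies $|\Gamma^{k}_{ij}|\le C\eps/d$ and the quadratic part of $R_{1212}$ contributes only $O(\eps^{2}/d^{2})$. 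Dividing by $\det g$ gives $\sup_{B}|K|\le(2\eps+O(\eps^{2}))/d^{2}$, and combining this with the relation $\delta^{2}\le(1+2\eps)d^{2}$ from (\ref{eq_delta_d}) (valid since $c\le\eps$) yields $\delta^{2}\sup_{B}|K|\le(1+2\eps)(2\eps+O(\eps^{2}))$, which for $\eps=0.01$ sits comfortably below $0.1$.

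For strong convexity I would feed this bound into the Hessian comparison theorem. Write $r=d_{g}(p_{0},\cdot)$ for the distance from the center and set $\kappa=\sup_{B}|K|<0.1/\delta^{2}$, so that $\sqrt{\kappa}\,r<\sqrt{\kappa}\,\delta<\sqrt{0.1}<\pi/2$ everywhere on $B$. Because the injectivity radius at every point of $B$ is at least $2\delta$, the function $r^{2}/2$ is smooth throughout $B$ (including across $p_{0}$), and Hessian comparison with the upper curvature bound $K\le\kappa$ gives
$$
\mathrm{Hess}\,\tfrac{r^{2}}{2}\ \ge\ \sqrt{\kappa}\,r\cot(\sqrt{\kappa}\,r)\,(g-dr\otimes dr)+dr\otimes dr .
$$
Since $\sqrt{\kappa}\,r<\pi/2$ keeps the cotangent positive, $\mathrm{Hess}(r^{2}/2)$ is positive definite, so $r^{2}/2$ is strictly geodesically convex on $B$. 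Consequently any geodesic with endpoints in $B$ remains in $B$, which is the convexity of the ball; the injectivity radius bound, together with the fact that the first conjugate point along any geodesic in $B$ lies at distance $\ge\pi/\sqrt{\kappa}>9\delta$, then forces any two points of $\overline{B}$ to be joined by a unique minimizing geodesic, whose interior convexity places in $B$. This is precisely strong convexity.

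The main obstacle is the first step, where the target $\delta^{2}\sup_{B}|K|<0.1$ is a hard numerical inequality and one cannot afford to lose track of constants. The structure that saves the estimate is that the dominant contribution is the linear second-derivative term, whose coefficients sum to exactly $2$, while the Christoffel-squared and determinant corrections are only of orders $\eps^{2}$ and $\eps$; with $\eps=0.01$ this leaves a generous margin. A secondary subtlety in the convexity argument is that strong convexity requires unique minimizing geodesics between \emph{arbitrary} points of $\overline{B}$, not merely those issuing from $p_{0}$; this is supplied by combining the injectivity radius hypothesis with the convexity of the ball, which confines every such minimizer to $\overline{B}$, a region free of conjugate points.
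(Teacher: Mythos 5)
Your curvature estimate is sound and follows essentially the same route as the paper: work in the coordinate chart, convert the hypothesis $\Vert g_{ij}-\delta_{ij}\Vert_{2,\alpha;U}\le\eps$ into pointwise bounds on $g_{ij}$, $D^{1}g_{ij}$, $D^{2}g_{ij}$, bound $g^{ij}$ and the Christoffel symbols, plug into a classical coordinate formula for $K$, and finish with $\delta^{2}\le(1+2\eps)d^{2}$ from (\ref{eq_delta_d}). The paper uses Struik's expression for $K$ in terms of first derivatives of the Christoffel symbols rather than your $R_{1212}$ decomposition (and your formula has an overall sign discrepancy, which is harmless since only $|K|$ matters), but the bookkeeping is equivalent and both leave a wide numerical margin below $0.1$.

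The gap is in the convexity half, in the sentence ``Consequently any geodesic with endpoints in $B$ remains in $B$, which is the convexity of the ball.'' Strict convexity of $r^{2}/2$ \emph{on $B$} only constrains geodesics whose entire image is already known to lie in $B$: the standard deduction (a convex function along a geodesic attains its maximum at an endpoint) requires the geodesic to stay inside the region where the Hessian estimate holds, which is exactly what you are trying to prove. A minimizing geodesic between two points of $\overline{B}$ has length at most $2\delta$ and is a priori only confined to $B_{M}(p_{0},2\delta)$; on $B_{M}(p_{0},2\delta)\setminus\overline{B}$ the hypotheses of Definition \ref{def_2345_scaled} give no curvature control at all (the metric bounds live only on $U\cong B$), so your Hessian comparison is unavailable there and cannot exclude an excursion that leaves $B$ and returns. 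Closing this hole genuinely requires an extra argument, e.g.\ an open/closed continuity argument over pairs of endpoints, whose closedness step uses the tangency argument at an interior point of a geodesic touching $\partial B$ (where the comparison \emph{is} valid, since the minimizing geodesic from $p_{0}$ to such a point stays in $\overline{B}$), and whose openness step uses local curvature bounds just outside $B$. That argument is precisely the content of Whitehead's convexity theorem, and it is what the paper does: it does not reprove convexity from the Hessian comparison, but cites Whitehead \cite{Wh} under the hypotheses $\delta^{2}\sup_{B}|K|<0.1$ and injectivity radius at least $2\delta$ at every point of $B$. So either invoke that theorem as the paper does, or supply the continuity argument; as written, your containment step assumes what is to be proved. (A further minor point: your uniqueness claim for points of $\overline{B}$ has the borderline case $d_{g}(p,q)=2\delta=\mathrm{inj}$, where uniqueness of minimizers can in general fail and some limiting care is needed.)
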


\begin{proof}
In order to estimate the Gauss curvature function we use the following expression:
\begin{equation}
	K = -\frac{1}{g_{11}} \left( 
	\frac{\partial}{\partial{x_{1}}} \Gamma^{2}_{12} - 
	\frac{\partial}{\partial{x_{2}}} \Gamma^{2}_{11} +
	\Gamma^{1}_{12}\Gamma^{2}_{11} - 
	\Gamma^{1}_{11}\Gamma^{2}_{12} + 
	\Gamma^{2}_{12}\Gamma^{2}_{12} - 
	\Gamma^{2}_{11}\Gamma^{2}_{22}
	\right) ,
	\label{eq_gauss_curvature_metric}
\end{equation}
where $\Gamma_{ij}^{k}$ are the Christoffel symbols of the second kind. See \cite{St} for more details. 
For the evaluation of the Christoffel symbols we use
\begin{equation}
	\Gamma^{k}_{ij} = \frac{1}{2} \, g^{lk}(\partial_{j}g_{il} + \partial_{i}g_{jl} - \partial_{l}g_{ji}),
	\label{eq_Christoffel}
\end{equation} 
where $[ g^{ij} ]$ denotes the inverse matrix of $[ g_{ij} ]$.
The norms in this proof are implicitly with respect to the set $U$.
Since $\det g = g_{11}g_{22} - g_{12}g_{21} \geq 1-2\eps $ we see that
$$
\max_{i,j} \left\Vert g^{ij} \right\Vert_{0} \leq 
\max_{i,j} \left\Vert g_{ij} \right\Vert_{0} \cdot 
\left\Vert \frac{1}{\det g} \right\Vert_{0} \leq 
\frac{1+\eps}{1-2\eps}.
$$
Thus according to (\ref{eq_Christoffel}) together with the assumption that $\max_{i,j} \left\Vert D^{1}g_{ij} \right\Vert_{0} \leq \eps d^{-1}$ we have
\begin{equation}
\max_{i,j,k} \left\Vert \Gamma_{ij}^{k} \right\Vert_{0} \leq 
\max_{i,j} \left\Vert g^{ij} \right\Vert_{0} \cdot 
3 \max_{i,j} \left\Vert D^{1}g_{ij} \right\Vert_{0}
\leq
\frac{1+\eps}{1-2\eps} \cdot 3\eps d^{-1} < \frac{d^{-1}}{32}.
\label{eq_bound_Christoffel}
\end{equation}
Since $\partial_{l}g^{ij} = -g^{ia}g^{jb}\partial_{l}g_{ab}$ we see that
$$
\max_{i,j} \left\Vert D^{1}g^{ij} \right\Vert_{0} 
\leq 
4 \cdot \left( \max_{i,j } \left\Vert g^{ij} \right\Vert_{0} \right)^2 \cdot \max_{i,j} \left\Vert D^{1}g_{ij} \right\Vert_{0}
\leq  
4 \left( \frac{1+\eps}{1-2\eps} \right)^{2} 
\eps d^{-1}.
$$
Differentiating (\ref{eq_Christoffel}) and using the assumption that $\max_{i,j} \left\Vert D^{2}g_{ij} \right\Vert_{0} \leq \eps d^{-2}$ we obtain that
$$
\max_{i,j,k} \left\Vert D^{1}\Gamma_{ij}^{k} \right\Vert_{0} \leq
\max_{i,j} \left\Vert D^{1}g^{ij} \right\Vert_{0} \cdot
3 \max_{i,j} \left\Vert D^{1}g_{ij} \right\Vert_{0} +
\max_{i,j} \left\Vert g^{ij} \right\Vert_{0} \cdot 
3 \max_{i,j} \left\Vert D^{2}g_{ij} \right\Vert_{0} < 
\frac{d^{-2}}{31}.
$$
By (\ref{eq_gauss_curvature_metric}) together with the above estimates and the assumption that $g_{11} \geq 1-\eps$ we obtain that
$$
d^{2} \left\Vert K \right\Vert_{0} \leq 
\left\Vert \frac{1}{g_{11}} \right\Vert_{0} \cdot 
\left[ 2 d^{2} \max_{i,j,k} \left\Vert D^{1}\Gamma_{ij}^{k} \right\Vert_{0} + 
4 \left( d \max_{i,j,k} \left\Vert \Gamma_{ij}^{k} \right\Vert_{0}\right)^{2} 
\right] < \frac{1}{14}.
$$

\medskip
According to (\ref{eq_delta_d}) we have $\delta \leq \sqrt{1+2\eps} \cdot d$ and therefore $\delta^{2} \sup_{B} |K| < 0.1$. 
Under our assumption that the injectivity radius at each point of the Riemannian disc $B$ is at least $2\delta$, we obtain that $B$ is strongly convex according to the Whitehead theorem \cite{Wh}.
\end{proof}

\medskip
Our next lemma states that the set $U$ is convex in the Euclidean sense, i.e. as a subset of $\RR^{2}$ with respect to the Euclidean metric.

\begin{lemma}
The set $U$ is convex in the Euclidean sense.
\label{lem_U_convex}
\end{lemma}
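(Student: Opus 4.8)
The plan is to deduce Euclidean convexity of $U$ from the sign of the curvature of its boundary. Since the injectivity radius at $p_{0}$ is at least $2\delta > \delta$, the set $U = B_{M}(p_{0},\delta)$ is a topological disc and its boundary $\partial U$ is the geodesic circle of radius $\delta$ about $x_{0}$, a single Jordan curve. I would show that $\partial U$ has positive Euclidean geodesic curvature everywhere; a $C^{1}$ Jordan curve of positive curvature lies locally on one side of each of its tangent lines, and such a curve bounds a convex region, so $U$ would be convex in the Euclidean sense.

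To estimate this curvature I would first work in the Riemannian category, where the concentric geodesic circles are controlled along the radial geodesics emanating from $x_{0}$. Their principal curvature $S(t)$ satisfies the Riccati equation $S' + S^{2} + K = 0$ with the boundary behaviour $S(t)\sim 1/t$ as $t \to 0$. Using the bound $\delta^{2}\sup_{B}|K| < 0.1$ from Lemma \ref{lem_delta_curv_bounded} and comparing with the constant-curvature models, one gets $S(\delta) = (1+o(1))/\delta$, so $\partial U$ has Riemannian geodesic curvature close to $1/\delta$, and in particular positive. The Euclidean geodesic curvature differs from the Riemannian one only by expressions built from the Christoffel symbols contracted with the (nearly Euclidean) tangent; by (\ref{eq_bound_Christoffel}) these are bounded by a small multiple of $d^{-1} \approx \delta^{-1}$, which is dominated by the main term $\approx 1/\delta$. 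Thus the Euclidean geodesic curvature of $\partial U$ stays positive.

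The hard part is regularity. For a metric that is only $C^{2,\alpha}$, the exponential map, and with it the distance function and the circle $\partial U$, are in general only $C^{1,\alpha}$, so the Euclidean curvature of $\partial U$ need not exist in the classical sense and the argument above is a priori formal. I would make it rigorous by approximation: after extending the coefficients $g_{ij}$ slightly beyond $\partial U$ with controlled norm, mollify to obtain smooth metrics $g_{k} \to g$ in the $C^{2,\alpha}$-norm. All the estimates above depend only on the $C^{2,\alpha}$-data and the curvature bound, so they hold uniformly for $g_{k}$ once $k$ is large, and the smooth balls $U_{k} = B_{M}(x_{0},\delta)$ for $g_k$ are convex by the regular case. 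Since $g_{k} \to g$ uniformly the distances, and hence the balls $U_{k}$, converge to $U$ in the Hausdorff sense, and a Hausdorff limit of convex sets is convex.

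An equivalent and more computational route, which I would keep in reserve, is to prove directly that the squared distance $\rho = \tfrac{1}{2}d_{g}(x_{0},\cdot)^{2}$ is convex in the Euclidean sense and to write $U = \{\rho < \delta^{2}/2\}$. Here one uses the pointwise identity $D^{2}\rho = \mathrm{Hess}^{g}\rho + \Gamma_{ij}^{k}\,\partial_{k}\rho$, bounds $\mathrm{Hess}^{g}\rho$ close to $g$ (hence close to the Euclidean identity, using $\Vert g_{ij}-\delta_{ij}\Vert_{0}\le \eps$) by Hessian comparison, and bounds the correction $\Gamma_{ij}^{k}\partial_{k}\rho$ by (\ref{eq_bound_Christoffel}) together with $|D\rho| \lesssim \delta$, obtaining $D^{2}\rho \succ 0$; the same mollification takes care of the regularity.
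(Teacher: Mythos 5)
Your core argument coincides with the paper's. The paper also proves the lemma by showing that $\partial U$ is a convex curve: it writes the Riemannian acceleration of a unit-speed boundary parametrization as $\nabla_{\dot\gamma}\dot\gamma = \Delta_M r \cdot N$, bounds $\delta\,\Delta_M r \geq \delta\sqrt{\kappa}\cot(\delta\sqrt{\kappa}) \geq 0.96$ using Lemma \ref{lem_delta_curv_bounded} --- this is exactly your Riccati bound $S(\delta)\approx 1/\delta$, since $\Delta_M r$ \emph{is} the geodesic curvature of the geodesic circle --- and controls $D_{\dot\gamma}\dot\gamma - \nabla_{\dot\gamma}\dot\gamma$ by the Christoffel bound (\ref{eq_bound_Christoffel}), exactly as you propose; the paper additionally carries out the bookkeeping showing $g_{0}(N,N_{0})$ is close to $1$, which your sketch leaves implicit but which is needed to convert "the Riemannian curvature vector is large" into "its Euclidean normal component is positive."

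The one genuine divergence is the treatment of regularity, and here your assessment is too pessimistic. It is true that for a $C^{2,\alpha}$ metric the exponential map is in general only $C^{1,\alpha}$, but the distance function $r$ is nevertheless $C^{2}$ away from the center: the geodesic vector field is $C^{1,\alpha}$, so its flow is $C^{1}$ jointly in time and initial data, hence $\nabla r$ (the velocity field of the radial geodesics, via the Gauss lemma) is $C^{1}$; consequently $\partial U = \{r=\delta\}$ is a $C^{2}$ curve and all the accelerations in the paper's direct computation exist classically. This is what lets the paper avoid any approximation step. Your mollification route would work in principle, but as sketched it carries unpaid debts: (i) extending $g_{ij}$ across $\partial U$ with controlled $C^{2,\alpha}$ norm requires $U$ to be an extension domain, which must be justified (it follows from $\partial U$ being a $C^{1}$ level set of $r$, but that observation already uses the regularity you are trying to sidestep); (ii) invoking "the regular case" for the mollified metrics $g_{k}$ requires a uniform lower bound on their injectivity radii near $x_{0}$, which does not follow automatically from $C^{2}$ closeness and needs a Klingenberg-type argument from the uniform curvature bounds. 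Since the uniform estimates you would feed into the smooth case are precisely the paper's direct estimates, the mollification buys little here. Your reserve route via $D^{2}\rho$ for $\rho = \tfrac12 d_{g}(x_{0},\cdot)^{2}$ is sound and is essentially the same comparison argument transplanted from the boundary curve to the sublevel sets; it faces the identical regularity question and is resolved the same way.
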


\begin{proof}
We show that $U$ is convex by showing that the boundary of $U$ is a convex curve. Since the $C^{2}$-norms of the functions $g_{ij}$ are bounded, we may extend $g_{ij}$ and their derivatives continuously to the closure of $U$, and therefore speak of the inner product $g(\cdot,\cdot) : T_{q}\RR^{2} \times T_{q}\RR^{2} \to \RR$ and the Christoffel symbols $\Gamma_{ij}^{k}$ at a point $q \in \partial{U}$.  

\medskip
Fix $q \in \partial{U}$ and let $\gamma : (-\tau, \tau) \to \partial{U}$ be a unit speed curve with respect to $g$ such that $\gamma(0)=q$.
Let $N \in T_{q}\RR^{2}$ be the inward unit normal at $q$ with respect to $g$, i.e. 
$$
\left\Vert N \right\Vert_{g} = 1, \qquad 
g(N, \dot{\gamma}) = 0, \qquad \text{and} \qquad
\{ \exp_{q}(tN) : t\in (0,t_{0})\} \subseteq U \text{ for some } t_{0}>0,
$$
where all the expressions are evaluated at $q$. 
Similarly, let $N_{0}$ be the inward unit normal at $q$ with respect to the Euclidean metric $g_{0}$.
The set $ T_{q}\RR^{2} \setminus \{ t \cdot \dot{\gamma}(0): t \in \RR \}$ consists of two connected components, one of which may be characterized by the sets
$ \{ u \in T_{q}\RR^{2} :g(u,N) > 0 \} = \{ u \in T_{q}\RR^{2} :g_{0}(u,N_{0}) > 0 \} $.
In particular we have that
\begin{equation}
g(N,N_{0}) > 0 \qquad \text{and} \qquad g_{0}(N,N_{0}) > 0.
\label{eq_normal_in_prod_pos}
\end{equation}

\medskip
Write $D_{v}u$ for the covariant derivative with respect to $g_{0}$, and $\nabla_{v}u$ for the covariant derivative with respect to $g$.
In order to show that $U$ is convex in the Euclidean sense it suffices to show that
\begin{equation}
g_{0}(D_{\dot{\gamma}}\dot{\gamma},N_{0}) > 0.
\label{eq_0129}
\end{equation}
Since $B$ is a Riemannian disc and $\gamma$ is a unit speed curve with respect to $g$, the Riemannian acceleration $\nabla_{\dot{\gamma}}\dot{\gamma}$ may be expressed in terms of the Laplacian of the distance function $r$ from the center of the disc $p_{0}$, so that
$$
\nabla_{\dot{\gamma}}\dot{\gamma}(q) = \Delta_{M}r(q) \cdot N,
$$
where $\Delta_{M}$ stands for the Laplace-Beltrami operator. Hence by the triangle inequality,
\begin{equation}
\left| g_{0}(D_{\dot{\gamma}}\dot{\gamma},N_{0}) - \Delta_{M}r \right| \leq
\left| g_{0}(D_{\dot{\gamma}}\dot{\gamma} - \nabla_{\dot{\gamma}}\dot{\gamma},N_{0}) \right| +
\Delta_{M}r \left|g_{0}(N, N_{0}) - 1 \right|,
\label{eq_0123}
\end{equation}
where all the expressions are evaluated at $q$.
Since $\gamma$ is a unit speed curve with respect to $g$ we have that $\Vert \dot{\gamma} \Vert_{g_{0}}^{2}\leq 1/(1-2\eps)$. 
Using the bound we obtained for the supremum norm of the Christoffel symbols in (\ref{eq_bound_Christoffel}) together with (\ref{eq_delta_d}) we see that $\max_{i,j,k} \Vert \Gamma_{ij}^{k} \Vert_{0} \leq \delta^{-1}/31$. Thus
$$
\max_{k} \left| \dot{\gamma}^{i}\dot{\gamma}^{j}\Gamma_{ij}^{k} \right| \leq 
\max_{i,j,k} \left\Vert \Gamma_{ij}^{k} \right\Vert_{0} \cdot (|\dot{\gamma}^{1}| + |\dot{\gamma}^{2}|)^{2} \leq 
2 \cdot \max_{i,j,k} \left\Vert \Gamma_{ij}^{k} \right\Vert_{0} \cdot 
\left\Vert \dot{\gamma} \right\Vert_{g_{0}}^{2} \leq 
\frac{\delta^{-1}}{15}.
$$
Since
$
\nabla_{\dot{\gamma}}\dot{\gamma} - D_{\dot{\gamma}}\dot{\gamma} = (\dot{\gamma}^{i}\dot{\gamma}^{j}\Gamma_{ij}^{k})\partial_{k}
$ 
we see that
$ \Vert \nabla_{\dot{\gamma}}\dot{\gamma} - D_{\dot{\gamma}}\dot{\gamma}\Vert_{g_{0}} \leq \sqrt{2}\delta^{-1}/15$.
Let $\kappa = \sup_{B} |K|$. It follows from a well-known comparison argument that the Laplacian of the distance function satisfies
$ \sqrt{\kappa} \cot(r \sqrt{\kappa}) \leq \Delta_{M} r \leq \sqrt{\kappa} \coth(r \sqrt{\kappa})$, see \cite{Pet} for more details.
In particular, since $\delta \sqrt{\kappa} < 1/3$ by Lemma  \ref{lem_delta_curv_bounded}, we have that
$ \delta \cdot \Delta_{M}r(q) \geq \delta \sqrt{\kappa} \cot(\delta \sqrt{\kappa}) \geq 0.96 $.
It therefore follows from the Cauchy-Schwarz inequality that
\begin{equation}
\frac{|g_{0}(\nabla_{\dot{\gamma}}\dot{\gamma} - D_{\dot{\gamma}}\dot{\gamma}, N_{0})|}{\Delta_{M}r} \leq 
\frac{\Vert \nabla_{\dot{\gamma}}\dot{\gamma} - D_{\dot{\gamma}}\dot{\gamma}\Vert_{g_{0}}}{\Delta_{M}r}  \leq
\frac{\sqrt{2}}{15} \cdot \frac{1}{\delta \Delta_{M}r} \leq 0.1,
\label{eq_0123_2}
\end{equation}
where all the expressions are evaluated at $q$.
Let $v = \dot{\gamma}/\Vert \dot{\gamma}\Vert_{g_{0}} \in T_{q}\RR^{2}$.
Using the assumption that $\max_{i,j} \Vert g_{ij} - \delta_{ij} \Vert \leq \eps$ we obtain that
$$
|g_{0}(N,v)| = |g_{0}(N,v) - g(N,v)| \leq 
2 \eps \left\Vert N \right\Vert_{g_{0}} \left\Vert v \right\Vert_{g_{0}}  \leq
 2\eps / \sqrt{1-2\eps}.
$$
Moreover, since $v$ and $N_{0}$ are orthonormal with respect to $g_{0}$ we have that
$$
\frac{97}{100} < \frac{1}{1+2\eps} - \frac{4\eps^{2}}{1-2\eps} \leq \Vert N \Vert_{g_{0}}^{2} - |g_{0}(N,v)|^{2} = |g_{0}(N,N_{0})|^{2} \leq \Vert N \Vert_{g_{0}}^{2} \leq \frac{1}{1-2\eps} = \frac{100}{98}.
$$
Since $g_{0}(N,N_{0}) > 0$ according to (\ref{eq_normal_in_prod_pos}), it follows from (\ref{eq_0123}) and (\ref{eq_0123_2}) that 
$$
\left| \frac{g_{0}(D_{\dot{\gamma}}\dot{\gamma},N_{0})}{\Delta_{M} r} - 1 \right| < 1.
$$
Therefore (\ref{eq_0129}) holds true and the proof is completed.
\end{proof}

\begin{proposition}
There exists a universal constant $C$ such that $\delta^{2} \left\Vert K \right\Vert_{0,\alpha;B} \leq C$.
\label{prop_bound_curv_alpha_norm}
\end{proposition}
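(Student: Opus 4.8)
The plan is to split the non-dimensional norm $\Vert K \Vert_{0,\alpha;B}$ into its two constituent parts and bound each separately. The supremum part is already under control, since Lemma \ref{lem_delta_curv_bounded} gives $\delta^{2}\sup_{B}|K| < 0.1$. Hence it remains only to bound the contribution of the H\"older seminorm, namely $\delta^{2}(2\delta)^{\alpha}\sup_{p \neq q \in B} |K(p)-K(q)|/d_{g}^{\alpha}(p,q)$. My strategy is to first estimate the \emph{Euclidean} H\"older seminorm $[K]_{\alpha;U}$, showing that $[K]_{\alpha;U} \leq C' d^{-2-\alpha}$ for a universal constant $C'$, and then to convert this Euclidean estimate into the required Riemannian one.

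To bound $[K]_{\alpha;U}$ I would read off the structure of $K$ from the curvature formula (\ref{eq_gauss_curvature_metric}) and the Christoffel formula (\ref{eq_Christoffel}): $K$ is a universal expression that is rational in the entries $g_{ij}$ (with denominators $\det g \geq 1-2\eps$ and $g_{11} \geq 1-\eps$ bounded away from zero), quadratic in the first derivatives $\partial g_{ij}$, and linear in the second derivatives $\partial^{2}g_{ij}$. I would then propagate the $C^{2,\alpha}$ control of the metric through this expression using two elementary tools: the product rule $[fh]_{\alpha;U} \leq \Vert f \Vert_{0;U}[h]_{\alpha;U} + [f]_{\alpha;U}\Vert h \Vert_{0;U}$, and the interpolation inequality $[f]_{\alpha;U} \leq \Vert D^{1}f \Vert_{0;U}\, d^{1-\alpha}$, which is valid because $U$ is convex by Lemma \ref{lem_U_convex}. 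Concretely, reusing the sup-norm bounds already obtained in the proof of Lemma \ref{lem_delta_curv_bounded} (for $g^{ij}$, $D^{1}g^{ij}$, $\Gamma^{k}_{ij}$ and $D^{1}\Gamma^{k}_{ij}$), I would successively estimate $[g^{ij}]_{\alpha;U}$, $[\Gamma^{k}_{ij}]_{\alpha;U}$, and the seminorms of the products $\Gamma\Gamma$ appearing in (\ref{eq_gauss_curvature_metric}), all of which come out $\lesssim d^{-2-\alpha}$.

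The delicate term, and the one I expect to be the main obstacle, is the seminorm of the first derivatives of the Christoffel symbols, $[\partial_{m}\Gamma^{k}_{ij}]_{\alpha;U}$. Differentiating (\ref{eq_Christoffel}) produces a piece linear in $\partial^{2}g_{ij}$, whose H\"older seminorm is governed by $[D^{2}g_{ij}]_{\alpha;U}$; this is precisely the highest-order quantity and the only one not accessible from a bare $C^{2}$ bound. Here the hypothesis $\Vert g_{ij}-\delta_{ij}\Vert_{2,\alpha;U} \leq \eps$ enters decisively, supplying $[D^{2}g_{ij}]_{\alpha;U} \leq \eps d^{-2-\alpha}$ and also $[D^{1}g^{ij}]_{\alpha;U} \lesssim \eps d^{-1-\alpha}$ via $\partial_{m}g^{ij} = -g^{ia}g^{jb}\partial_{m}g_{ab}$ and the product rule. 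Assembling these with the product rule applied to $K = -g_{11}^{-1}(\,\cdots)$ yields $[K]_{\alpha;U} \leq C' d^{-2-\alpha}$.

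Finally I would pass from the Euclidean to the Riemannian seminorm. Since $\max_{i,j}\Vert g_{ij}-\delta_{ij}\Vert_{0;U} \leq \eps$, every tangent vector satisfies $\Vert v \Vert_{g}^{2} \geq (1-2\eps)|v|^{2}$, so every path is $g$-longer than $\sqrt{1-2\eps}$ times its Euclidean length and hence $d_{g}(p,q) \geq \sqrt{1-2\eps}\,d_{g_{0}}(p,q)$ for all $p,q \in U$. Consequently $\sup_{p\neq q}|K(p)-K(q)|/d_{g}^{\alpha}(p,q) \leq (1-2\eps)^{-\alpha/2}[K]_{\alpha;U}$. Multiplying by $\delta^{2}(2\delta)^{\alpha}$ and using $\delta \leq \sqrt{1+2\eps}\,d$ from (\ref{eq_delta_d}) to replace every factor of $\delta$ by $d$, the powers of $d$ cancel against $d^{-2-\alpha}$ and leave a universal constant. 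Adding the $0.1$ from the supremum part completes the bound $\delta^{2}\Vert K \Vert_{0,\alpha;B} \leq C$.
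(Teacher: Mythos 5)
Your overall strategy is the same as the paper's: control the Euclidean H\"older data of $K$ on $U$ by propagating the $C^{2,\alpha}$ bounds on $g_{ij}$ through (\ref{eq_Christoffel}) and (\ref{eq_gauss_curvature_metric}) (the paper phrases this via sub-multiplicativity of the non-dimensional $\Vert \cdot \Vert_{0,\alpha}$ norms rather than the product rule for seminorms, but the content is identical), and then convert to the Riemannian quantity by comparing $d_{g}$ with $d_{g_{0}}$. The Euclidean part of your sketch is sound, including the identification of $[D^{2}g_{ij}]_{\alpha;U}$ as the one term where the full $C^{2,\alpha}$ hypothesis is indispensable, and your use of Lemma \ref{lem_U_convex} to justify the interpolation inequality $[f]_{\alpha;U} \lesssim d^{1-\alpha}\Vert D^{1}f \Vert_{0;U}$ matches the paper.

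The gap is in your final conversion step. You assert $d_{g}(p,q) \geq \sqrt{1-2\eps}\, d_{g_{0}}(p,q)$ on the grounds that ``every path is $g$-longer than $\sqrt{1-2\eps}$ times its Euclidean length.'' But $d_{g}$ is the Riemannian distance in $M$: the infimum runs over all paths in $M$ joining $p$ and $q$, and such a path may leave $B$. Outside $B$ the chart does not exist --- the coefficients $g_{ij}$, and indeed the very notion of ``Euclidean length,'' are defined only on $U$ --- so the pointwise comparison $\Vert v \Vert_{g}^{2} \geq (1-2\eps)|v|^{2}$ says nothing about those paths; and truncation arguments that estimate only the portions of the path inside $B$ fail when $p$ and $q$ are both close to $\partial B$ but far from each other. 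This is precisely why the paper proves that $B$ is strongly convex in Lemma \ref{lem_delta_curv_bounded} (curvature bound plus injectivity radius, via the Whitehead theorem) and invokes it at the start of this proof: strong convexity guarantees that the minimizing geodesic between any two points of $B$ stays inside $B$, and only then does your pointwise comparison, applied along that geodesic, give the lower bound on $d_{g}$. Note this lower bound is exactly the direction you need (it sits in the denominator of the H\"older quotient), so the issue cannot be sidestepped. The repair costs one sentence --- you already cite Lemma \ref{lem_delta_curv_bounded} for the supremum bound, and you must also invoke its convexity conclusion --- but as written the step is unjustified.
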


\begin{proof} 
For convenience we write $x \lesssim y$ if there exists a universal constant $C$ such that $x \leq C \cdot y$. In case $x \lesssim y$ and $y \lesssim x$ we write $x \eqsim y$.
According to Lemma \ref{lem_delta_curv_bounded} we have that $B$ is strongly convex. Lemma \ref{lem_U_convex} shows that $U$ is convex in the Euclidean sense. 
It therefore follows from the assumption $\max_{i,j} \Vert g_{ij}-\delta_{ij}\Vert_{0;U} \leq 0.01$ that for any $p,q \in U$,
$$
0.98 \cdot d_{g_{0}}(p,q)\leq d_{g}(p,q) \leq 1.01 \cdot d_{g_{0}}(p,q).
$$
Thus by considering $\Vert K \Vert_{0,\alpha;U}$ in the Euclidean sense, i.e. where distances are with respect to the Euclidean metric $g_{0}$, we only lose a constant factor.
Therefore all of the norms in this proof are implicitly with respect to $U$, and our goal is to show that 
\begin{equation}
d^{2} \left\Vert K \right\Vert_{0,\alpha} \lesssim 1.
\label{eq_1327}
\end{equation}
Since $U$ is convex we have the inclusion of H\"{o}lder spaces $C^{k+1}(\overline{U}) \subseteq C^{k,\alpha}(\overline{U})$. More precisely, the mean value theorem implies that $[f]_{\alpha} \leq \sqrt{2} d^{1-\alpha} \Vert D^{1}f \Vert_{0}$ for any $f \in C^{1}(\overline{U})$. It therefore follows from our assumptions that
$$
\left\Vert g_{ij} \right\Vert_{0,\alpha} \leq \delta_{ij} + \sqrt{2} \eps
\qquad \text{and} \qquad 
d \left\Vert D^{1}g_{ij} \right\Vert_{0,\alpha} \leq \sqrt{2} \eps,
$$
where $\Vert D^{1} g_{ij} \Vert_{0,\alpha} = \sup_{|\beta| = 1} \Vert D^{\beta} g_{ij} \Vert_{0,\alpha}$ and $\beta$ is a two-dimensional multi-index.
In the following estimates we implicitly use the sub-multiplicativity of $\Vert \cdot \Vert_{0,\alpha}$ and the fact that 
$\Vert 1 / f \Vert_{0,\alpha} \leq
\Vert 1 / f \Vert_{0} + 
\Vert f \Vert_{0,\alpha} \cdot 
\Vert 1 / f \Vert_{0}^{2}$ for a non-vanishing function $f$. We have that
$$
\max_{i,j} \left\Vert g^{ij} \right\Vert_{0,\alpha} \leq 
\max_{i,j} \left\Vert g_{ij} \right\Vert_{0,\alpha} \cdot 
\left\Vert \frac{1}{\det g} \right\Vert_{0,\alpha} \lesssim 1.
$$
Using the expression (\ref{eq_Christoffel}) for the Christoffel symbols we see that
$$
d \max_{i,j,k} \left\Vert \Gamma^{k}_{ij} \right\Vert _{0,\alpha} \lesssim
\max_{i,j} \left\Vert g^{ij} \right\Vert_{0,\alpha} \cdot  
d \max_{i,j} \left\Vert D^{1}g_{ij} \right\Vert_{0,\alpha} \lesssim 1.
$$
Since $\partial_{l}g^{ij} = -g^{ia}g^{jb}\partial_{l}g_{ab}$ we obtain
$$
 d \max_{i,j} \left\Vert D^{1}g^{ij} \right\Vert_{0,\alpha} \lesssim
\left( \max_{i,j} \left\Vert g^{ij} \right\Vert_{0,\alpha} \right)^2 \cdot 
 d \max_{i,j} \left\Vert D^{1}g_{ij} \right\Vert_{0,\alpha}
\lesssim 1.
$$
By differentiating (\ref{eq_Christoffel}) we see that
\begin{align*}
d^{2} \max_{i,j,k} \left\Vert D^{1}\Gamma^{k}_{ij} \right\Vert_{0,\alpha} &\lesssim 
d \max_{i,j} \left\Vert D^{1}g^{ij} \right\Vert_{0,\alpha} 
\cdot
d \max_{i,j} \left\Vert D^{1}g_{ij} \right\Vert_{0,\alpha} 
\\ &+
\max_{i,j} \left\Vert g^{ij} \right\Vert_{0,\alpha} 
\cdot
d^{2} \max_{i,j} \left\Vert D^{2}g_{ij} \right\Vert_{0,\alpha} \lesssim 1.
\end{align*}
Hence according to (\ref{eq_gauss_curvature_metric})
together with the estimates above we obtain that
$$
d^{2} \left\Vert K \right\Vert_{0,\alpha} \lesssim
\left\Vert \frac{1}{g_{11}} \right\Vert_{0,\alpha} \cdot 
\left[ 
d^{2} \max_{i,j,k} \left\Vert D^{1}\Gamma_{ij}^{k} \right\Vert_{0,\alpha} + 
\left( d \max_{i,j,k} \left\Vert \Gamma_{ij}^{k} \right\Vert_{0,\alpha} \right)^{2} 
\right] \lesssim 1,
$$
so that (\ref{eq_1327}) holds true and the proof is completed.
\end{proof}

\begin{proof} [Proof of Theorem \ref{thm_1}]
Suppose that $M$ is extrinsically $\alpha$-regular at $p_{0} \in M$ to distance $\tilde{\delta} > 0$. According to Proposition \ref{prop_chart_def_scale} we have that $M$ is $\alpha$-regular via an $\eps$-restricted coordinate chart at $p_{0} \in M$ to distance $\delta =  \tilde{\delta} \eps / 14$. It therefore follows from Proposition \ref{prop_bound_curv_alpha_norm} that there exists a universal constant $C \geq 1$ such that  $\delta^{2} \Vert K \Vert_{0,\alpha;B} \leq C$. 
Then for $\delta_{0} = \delta/ \sqrt{C}$ and $B_{0} = B_{M}(p_{0},\delta_{0})$ we have that $B_{0} \subseteq B$ and
$$
\delta_{0}^{2} \left\Vert K \right\Vert_{0,\alpha;B_{0}} = 
C^{-1}\delta^{2} \left\Vert K \right\Vert_{0,\alpha;B_{0}} \leq
C^{-1}\delta^{2} \left\Vert K \right\Vert_{0,\alpha;B} \leq 1.
$$
Therefore we have that $\rho_{\text{int}} \geq (\eps / 14\sqrt{C}) \cdot \rho_{\text{ext}}$.
	
\end{proof}

\section{Apriori bounds for Poisson's equation}
\label{sec_claim_2}

For the proof of Theorem \ref{thm_2} we use an isothermal coordinate chart $z : \overline{B} \to \delta \overline{\DD}$, where $\delta \DD = B_{\RR^{2}}(0,\delta)$ stands for the Euclidean disc of radius $\delta$ around the origin. For the boundary of $\delta \DD$ we write $\delta \MBS^{1}$. 
In these coordinates the metric takes the form
$$
g = \varphi \cdot |dz|^{2},
$$ 
where $\varphi$ is a positive function, referred to as the {\it conformal factor}. The Gauss curvature function is given by Liouville's equation
\begin{equation}
K = -\frac{\Delta \left( \log \varphi \right)}{2 \varphi},
\label{eq_curvature_formula}
\end{equation}
where $\Delta$ is the usual Laplace operator with respect to the coordinate map $z = x+iy$. 
The following estimate from \cite{E} provides a bound for the supremum norm of the conformal factor $\varphi$ in terms of the curvature function $K$.
Recall that $d_{g}$ and $d_{g_{0}}$ stand for the Riemannian distance and the Euclidean distance respectively.

\begin{theorem} {\rm \cite[Corollary 1.2]{E}.}
	Let $M$ be a $C^{2}$-smooth Riemannian surface, fix $p_{0} \in M$ and let $\delta > 0$. 
	Suppose that at any point $p \in B = B_{M}(p_{0},\delta)$ the injectivity radius is at least $2\delta$ and $-\kappa \leq K(p) \leq \kappa$, where $\kappa > 0$ is a constant satisfying $\delta^{2}\kappa < \pi^{2} / 8$. 
	Let $z : \overline{B} \to \delta \overline{\DD}$ be an isothermal coordinate chart such that $z(p_{0}) = 0$, $z(\partial{B}) = \delta \MBS^{1}$ and whose conformal factor is $\varphi$. Then
	$$
	\sup_{B} \left| \log \varphi \right| \leq 8 \delta^{2}\kappa,
	$$
	and for any two distinct points $p, q \in B$ we have
	$$
	\exp(-4\delta^{2}\kappa)
	\leq \frac{d_{g}(p,q)}{d_{g_{0}}(z(p),z(q))} \leq 
	\exp(4\delta^{2}\kappa).
	$$
	\label{thm_isothermal}
\end{theorem}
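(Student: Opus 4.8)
The plan is to pass to the semilinear PDE satisfied by $u = \log\varphi$ and to separate the distance estimate from the conformal-factor estimate. By Liouville's equation (\ref{eq_curvature_formula}) we have $\Delta u = -2K e^{u}$, and since $-\kappa \le K \le \kappa$ this gives the pointwise inequality $|\Delta u| \le 2\kappa\, e^{u}$ on $\delta\DD$. Granting the conformal-factor bound $\sup_{B}|u| \le 8\delta^{2}\kappa$, the distance comparison is routine: writing $\sqrt{\varphi} = e^{u/2}$ we get $e^{-4\delta^{2}\kappa} \le \sqrt{\varphi} \le e^{4\delta^{2}\kappa}$, so $L_{g}(\gamma) = \int_{\gamma} \sqrt{\varphi}\,|dz|$ and $L_{g_{0}}(\gamma) = \int_{\gamma} |dz|$ satisfy $e^{-4\delta^{2}\kappa}L_{g_{0}}(\gamma) \le L_{g}(\gamma) \le e^{4\delta^{2}\kappa}L_{g_{0}}(\gamma)$ for every path. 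Taking the preimage of the straight segment (which stays in $\delta\DD$ by convexity) for the upper bound on $d_{g}$, and the $g$-minimizing geodesic for the upper bound on $d_{g_{0}}$, yields the two-sided ratio estimate. Hence the entire content is the bound $\sup_{B}|\log\varphi| \le 8\delta^{2}\kappa$.

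For the interior-from-boundary step I would use the Green's function $G \ge 0$ of the disc $\delta\DD$. Decompose $u = h + w$, where $h$ is the harmonic extension of $u|_{\partial B}$ and $w$ vanishes on $\delta\MBS^{1}$ with $\Delta w = \Delta u$. The harmonic part satisfies $\inf_{\partial B}u \le h \le \sup_{\partial B}u$ by the maximum principle, while $w(z) = 2\int_{\delta\DD}G(z,\cdot)\,Ke^{u}\,dA$ obeys $|w(z)| \le 2\kappa\,(\sup_{B}e^{u})\sup_{z}\int_{\delta\DD}G(z,\cdot)\,dA = \tfrac{\delta^{2}\kappa}{2}\sup_{B}e^{u}$, using $\int_{\delta\DD}G(z,w)\,dw = (\delta^{2}-|z|^{2})/4 \le \delta^{2}/4$. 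Combining these gives the self-improving inequalities $\sup_{B}u \le \sup_{\partial B}u + \tfrac{\delta^{2}\kappa}{2}e^{\sup_{B}u}$ and $\inf_{B}u \ge \inf_{\partial B}u - \tfrac{\delta^{2}\kappa}{2}e^{\sup_{B}u}$. Since $u \to 0$ as $\kappa \to 0$, a continuity and bootstrap argument selects the small branch and yields $\sup_{B}|u| \lesssim \sup_{\partial B}|u| + \delta^{2}\kappa$, with the explicit constants tracked to reach the factor $8$.

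The \emph{main obstacle} is the boundary estimate $\sup_{\partial B}|\log\varphi| \lesssim \delta^{2}\kappa$, which I would attack through the distance function $r = d_{g}(p_{0},\cdot)$, which equals $\delta$ on $\partial B$. Since $z$ is conformal, the eikonal equation $|\nabla_{g}r|_{g} = 1$ becomes $\varphi = |\nabla_{g_{0}}r|^{2}$, and the conformal change of the Laplacian gives $\Delta_{g_{0}}r = \varphi\,\Delta_{M}r$, where the factor $\Delta_{M}r$ is pinched by the comparison inequalities $\sqrt{\kappa}\cot(r\sqrt{\kappa}) \le \Delta_{M}r \le \sqrt{\kappa}\coth(r\sqrt{\kappa})$ already invoked in Lemma \ref{lem_U_convex}. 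This reduces the problem to controlling the flat distance function $r$, and crucially its gradient, by comparison with the rotationally symmetric model functions of constant curvature $\pm\kappa$ that share the boundary value $\delta$. The delicate point — and where I expect the real work to lie — is that a geodesic circle need not be a level set of a radial function, so $\varphi$ is not constant on $\partial B$; passing from the averaged control supplied by the length comparison $\tfrac{2\pi}{\sqrt{\kappa}}\sin(\delta\sqrt{\kappa}) \le L_{g}(\partial B) \le \tfrac{2\pi}{\sqrt{\kappa}}\sinh(\delta\sqrt{\kappa})$ to a pointwise bound on $\varphi$ along $\partial B$ requires a gradient, Harnack-type estimate, for which the positive $g$-harmonic function $\log(\delta/|z|)$ — the Green's function of $B$ with pole at $p_{0}$ — is the natural tool.
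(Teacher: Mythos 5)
The paper contains no proof of Theorem \ref{thm_isothermal} to compare against: the statement is imported verbatim from \cite[Corollary 1.2]{E}, so your attempt has to stand on its own, and it does not yet. The part you complete --- deducing the bi-Lipschitz estimate from $\sup_B|\log\varphi|\le 8\delta^2\kappa$ by integrating $\sqrt{\varphi}$ along a straight segment (for the upper bound on $d_g$) and along the $g$-minimizing geodesic (for the upper bound on $d_{g_0}$) --- is indeed routine, except that you must also invoke convexity of $B$ (Whitehead's theorem \cite{Wh}, available here since $\mathrm{inj}\ge 2\delta$ and $\delta\sqrt{\kappa}<\pi/(2\sqrt 2)$) so that the minimizing geodesic stays in $\overline{B}$; otherwise its image under $z$ is not even defined. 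The first genuine gap is the one you flag yourself: the boundary estimate $\sup_{\partial B}|\log\varphi|\lesssim \delta^2\kappa$ is exactly the hard content of \cite{E} --- the normalization $z(\partial B)=\delta\MBS^1$ couples the chart to the global geometry of the disc --- and your sketch (eikonal equation, Laplacian comparison, circumference comparison, a hoped-for Harnack-type gradient estimate) never actually converts the averaged circumference control into a pointwise bound on $\varphi|_{\partial B}$. An argument that leaves this step open proves nothing, since both your interior inequalities are anchored to $\sup_{\partial B}u$ and $\inf_{\partial B}u$.

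The second gap is quantitative and would remain even if the boundary estimate were granted: your interior bootstrap cannot close on the full parameter range of the theorem. Your self-improving inequality reads $s\le a+\tfrac{\delta^2\kappa}{2}\,e^{s}$ with $s=\sup_B\log\varphi$ and $a=\sup_{\partial B}\log\varphi$. The function $s\mapsto s-\tfrac{\delta^2\kappa}{2}e^{s}$ attains its maximum $\log\bigl(2/(\delta^2\kappa)\bigr)-1$ at $s=\log\bigl(2/(\delta^2\kappa)\bigr)$, so the equation $s=a+\tfrac{\delta^2\kappa}{2}e^{s}$ has a root only when $\tfrac{\delta^2\kappa}{2}e^{1+a}\le 1$, i.e.\ roughly when $\delta^2\kappa\le 2/e\approx 0.74$. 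The theorem permits $\delta^2\kappa$ up to $\pi^2/8\approx 1.23$, and in the range $(2/e,\pi^2/8)$ your inequality holds for \emph{every} value of $s$, hence carries no information and no continuity argument can extract any bound, let alone the constant $8$. (The continuity argument itself is also incomplete as stated: $\kappa$ is fixed, so one must run continuity in the radius $t\delta$, which requires a family of normalized charts $z_t:\overline{B_M(p_0,t\delta)}\to t\delta\overline{\DD}$; these are not restrictions of $z$, and continuity of $t\mapsto\sup|\log\varphi_t|$ is a nontrivial claim about Riemann maps that you would have to prove.) The structural culprit is the exponential loss incurred by estimating $|\Delta\log\varphi|\le 2\kappa\sup_B\varphi$ in sup norm; any successful proof of the stated constants must avoid exactly this loss.
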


\medskip
In order to relate the norms of the conformal factor $\varphi$ and the Gauss curvature function $K$ we apply a-priori bounds for a solution of Poisson's equation.
The definitions and propositions below are taken from \cite{GT}, sometimes using different notation.
Fix a bounded domain $\Omega \subseteq \RR^{n}$. For $x \in \Omega$ we write $d_{x} = \text{dist}(x,\partial{\Omega})$ for the distance between $x$ and the boundary of $\Omega$. 

\begin{proposition} {\rm (Gradient estimate \cite[Theorem 3.9]{GT}).}
Let $u \in C^{2}(\Omega)$ satisfy Poisson's equation $\Delta u = f$ in $\Omega$. Then
$$
\sup_{\Omega} d_{x}|\nabla u(x)| \leq C (\sup_{\Omega} |u| + \sup_{\Omega} d_{x}^{2}|f(x)|),
$$
where $C=C(n)$ is a constant depending on $n$.
\label{prop_simple_schauder}
\end{proposition}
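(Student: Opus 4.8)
The plan is to reduce the global statement to a pointwise interior estimate on a single ball and then apply the Green's representation formula. Fix $x_{0} \in \Omega$, set $d = d_{x_{0}}$, and choose the radius $r = d/2$, so that the closed ball $\overline{B} \subseteq \Omega$, where $B = B(x_{0},r)$. Since $u \in C^{2}(\overline{B})$ and $\Delta u = f$, Green's second identity applied with the Green's function $G(x_{0},\cdot)$ of $B$ (excising the singularity at $x_{0}$ and passing to the limit in the standard way) yields
$$
u(x_{0}) = \int_{\partial B} u(y)\, \partial_{\nu_{y}} G(x_{0},y)\, dS(y) - \int_{B} G(x_{0},y)\, f(y)\, dy.
$$
The point of this formula is that it is valid under the sole hypothesis $u \in C^{2}$, so that no Newtonian-potential regularity theory for merely continuous $f$ is required.

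Next I would differentiate the identity in the base point $x_{0}$. Both kernels depend smoothly on $x_{0}$ away from $y = x_{0}$, and the differentiated kernels remain locally integrable, since $\nabla_{x} G(x,y)$ behaves like $|x-y|^{1-n}$ near the diagonal; hence differentiation under the integral sign is justified and expresses $\nabla u(x_{0})$ as a boundary integral against $\nabla_{x}\partial_{\nu_{y}} G$ plus a volume integral against $\nabla_{x} G$. It then remains to bound the two explicit kernels for a ball. The boundary kernel equals the gradient of the Poisson kernel at the center, which on $\partial B$ is bounded by $C(n)\, r^{-n}$; integrating over $\partial B$, whose surface measure is of order $r^{n-1}$, contributes at most $C(n)\, r^{-1}\sup_{\partial B}|u|$. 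The volume kernel satisfies $\int_{B} |\nabla_{x} G(x_{0},y)|\, dy \le C(n)\, r$, because $\int_{B} |x_{0}-y|^{1-n}\, dy$ is of order $r$, so this term contributes at most $C(n)\, r\, \sup_{B}|f|$.

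Combining the two bounds gives the pointwise estimate $|\nabla u(x_{0})| \le C(n)\, r^{-1}\sup_{\partial B}|u| + C(n)\, r\, \sup_{B}|f|$. Multiplying by $d = 2r$, using $\sup_{\partial B}|u| \le \sup_{\Omega}|u|$, and observing that every $y \in B$ satisfies $d_{y} \ge d - r = r$, so that $r^{2}\sup_{B}|f| \le \sup_{\Omega} d_{y}^{2}|f(y)|$, produces
$$
d_{x_{0}}\,|\nabla u(x_{0})| \le C(n)\Big(\sup_{\Omega}|u| + \sup_{\Omega} d_{y}^{2}|f(y)|\Big).
$$
Taking the supremum over $x_{0}\in\Omega$ then completes the argument.

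I expect the only genuine work to lie in the explicit kernel estimates for the ball: the integrability of $\nabla_{x} G$ and the pointwise control of the Poisson-kernel gradient at the center, which must be carried out for the $n \ge 3$ case and, separately, for the logarithmic fundamental solution when $n = 2$. A secondary point requiring care is the justification of Green's representation and of differentiation under the integral sign at the singular diagonal; both are handled by the usual excision-and-limit procedure, and are clean precisely because we work with $u \in C^{2}$ rather than reconstructing $u$ from a potential.
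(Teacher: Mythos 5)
Your proof is correct: the Green-representation reduction on the ball $B(x_{0},d_{x_{0}}/2)$, the kernel bounds at the center, and the passage to the weighted global estimate via $d_{y}\geq r$ for $y\in B(x_{0},r)$ all hold, and you correctly flag the two points needing genuine care (the excision argument behind differentiating the volume potential of a merely continuous $f$, and the separate $n=2$ computation with the logarithmic kernel). Note, however, that the paper itself gives no proof of this proposition --- it is quoted verbatim from \cite[Theorem 3.9]{GT} --- so the relevant comparison is with the proof in that reference, which takes a genuinely different route. Gilbarg and Trudinger avoid potential theory entirely: to bound $D_{n}u(z)$ at the center of a ball $B_{R}(z)$ they form the odd part $\phi(x) = \frac{1}{2}\left( u(x',x_{n}) - u(x',2z_{n}-x_{n}) \right)$ of $u$ across the hyperplane $\{x_{n}=z_{n}\}$, observe that $\phi$ vanishes on the flat face of the half-ball, is at most $\sup_{\partial B}|u|$ on the spherical face, and satisfies $|\Delta \phi| \leq \sup_{B}|f|$, and then dominate $\pm\phi$ by an explicit superharmonic barrier vanishing at $z$; comparing normal derivatives at $z$ yields $|Du(z)| \leq \frac{n}{R}\sup_{\partial B}|u| + \frac{R}{2}\sup_{B}|f|$, after which the weighted global bound follows exactly as in your last paragraph. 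This maximum-principle argument buys the avoidance of your one delicate step --- justifying $\nabla \int_{B} G(\cdot,y)f(y)\,dy = \int_{B}\nabla_{x}G(\cdot,y)f(y)\,dy$ for $f$ merely bounded, which is \cite[Lemma 4.1]{GT} --- and explains why the result sits in their Chapter 3 (maximum principle) rather than Chapter 4 (Newtonian potential). Your representation argument buys explicit constants and is closer in spirit to the potential-theoretic proofs of the H\"{o}lder estimates quoted later in the paper (Proposition \ref{prop_schauder}), so it is a perfectly good, self-contained alternative.
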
 

\medskip
In the following propositions we use an additional set of norms which are slightly different from the ones used in the previous sections.
For an integer $k\geq 0$, $\sigma \geq 0$ and $f \in C^{k}(\Omega)$ define
$$
\left[ f \right]^{(\sigma)}_{k,0;\Omega} =
\left[ f \right]^{(\sigma)}_{k;\Omega} = \sup_{x \in \Omega, |\beta| = k} d_{x}^{k+\sigma} \left| D^{\beta}f(x) \right|
\qquad \text{and} \qquad 
\left\Vert f \right\Vert^{(\sigma)}_{k;\Omega} =
\sum_{j=0}^{k} \left[ f \right]^{(\sigma)}_{j;\Omega}.
$$
For $0 < \alpha \leq 1$, denoting $d_{x,y} = \min(d_{x},d_{y})$, we also define
$$
[f]^{(\sigma)}_{k,\alpha;\Omega} = \sup_{x \neq y\in \Omega,|\beta|=k} d_{x,y}^{k+\alpha+\sigma} \frac{|D^{\beta}f(x)-D^{\beta}f(y)|}{|x-y|^{\alpha}}
\qquad \text{and} \qquad
\left\Vert f \right\Vert^{(\sigma)}_{k,\alpha;\Omega} =
\left\Vert f \right\Vert^{(\sigma)}_{k;\Omega} + 
\left[ f \right]^{(\sigma)}_{k,\alpha;\Omega}.
$$
In the case where $\sigma = 0$ we denote these quantities by $[\;\cdot\;]^{*} = [\;\cdot\;]^{(0)}$ and $\Vert \cdot \Vert^{*} = \Vert \cdot \Vert^{(0)}$.
Note that $\Vert \cdot\Vert_{k;\Omega}^{(\sigma)}$ and $\Vert \cdot \Vert_{k,\alpha;\Omega}^{(\sigma)}$ are norms on the subspaces of $C^{k}(\Omega)$ and $C^{k,\alpha}(\Omega)$ respectively for which they are finite. When the set $\Omega$ is fixed we sometimes omit it from our notation. 
It is not too difficult to verify that
\begin{equation}
\Vert fg \Vert^{(\sigma + \tau)}_{0,\alpha} \leq 
\Vert f \Vert^{(\sigma)}_{0,\alpha} \cdot
\Vert g \Vert^{(\tau)}_{0,\alpha}.
\label{eq_submul_astrix}
\end{equation}

\begin{lemma} {\rm(Interpolation inequalities \cite[Lemma 6.32]{GT}).}
Suppose $j+\beta < k + \alpha$, where $j,k \in \ZZ_{\geq 0}$ and $0\leq \alpha,\beta \leq 1$. Let $\Omega$ be an open subset of $\RR^{n}$ and assume $u\in C^{k,\alpha}(\Omega)$. Then for any $\eps > 0$ and some constant $C=C(\eps,k,j)$	we have
$$
\left[ u \right]^{*}_{j,\beta;\Omega} \leq 
C \left\Vert u \right\Vert_{0;\Omega} + 
\eps \left[ u \right]^{*}_{k,\alpha;\Omega}
\qquad \text{and} \qquad
\left\Vert u \right\Vert^{*}_{j,\beta;\Omega} \leq 
C \Vert u \Vert_{0;\Omega} + 
\eps \left[ u \right]^{*}_{k,\alpha;\Omega}.
$$
\label{lem_intepolation}
\end{lemma}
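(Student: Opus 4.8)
The plan is to reduce the weighted, multidimensional statement to a single one-dimensional interpolation estimate, and then to recover the boundary-distance weights $d_{x}$ by a localization-and-rescaling argument in which the scale invariance built into the interior norms does the bookkeeping. I would prove only the single-step case and obtain the general inequality by iteration, absorbing all constants into $C=C(\eps,k,j)$.

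First I would establish the one-dimensional building block: for $g\in C^{k,\alpha}([0,L])$ with $j+\beta<k+\alpha$,
$$
[g^{(j)}]_{\beta;[0,L]} \leq C(\eps)\,L^{-(j+\beta)}\sup_{[0,L]}|g| + \eps\,L^{k+\alpha-j-\beta}[g^{(k)}]_{\alpha;[0,L]}.
$$
For $L=1$ this is the classical Landau--Kolmogorov interpolation inequality: a divided-difference representation expresses $g^{(j)}$ through finitely many values of $g$, contributing the $\sup|g|$ term, plus a remainder whose modulus is controlled by the oscillation of $g^{(k)}$ and hence by $[g^{(k)}]_{\alpha}$; splitting the H\"{o}lder quotient of $g^{(j)}$ at a free scale and optimizing gives the $\beta$-seminorm bound. (Alternatively one may argue by compactness on $C^{k,\alpha}([0,1])$.) The dependence on $L$ then follows by applying the case $L=1$ to $x\mapsto g(Lx)$ and tracking the homogeneity degree of each seminorm.

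Second I would lift this to $\RR^{n}$ on a Euclidean ball $B_{R}=B_{R}(x_{0})$. Each pure derivative $D^{\mu}u$ with $|\mu|=j$ is estimated by applying the one-dimensional inequality along segments parallel to the coordinate axes and combining directions, which yields the unweighted ball estimate
$$
[u]_{j,\beta;B_{R}} \leq C(\eps)\,R^{-(j+\beta)}\sup_{B_{R}}|u| + \eps\,R^{k+\alpha-j-\beta}[u]_{k,\alpha;B_{R}}.
$$

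The crux is the third step, where I would recover the interior seminorms $[\,\cdot\,]^{*}$. Fixing $x_{0}\in\Omega$ and setting $d=d_{x_{0}}$, I would work on $B=B_{d/2}(x_{0})$, on which $d_{x}\eqsim d$ for every $x\in B$, so that the weight $d_{x}^{j+\beta}$ is comparable to $R^{j+\beta}$ with $R=d/2$. Applying the rescaled ball estimate on $B$ and taking the supremum over $x_{0}$ converts each power of $R$ into the matching distance weight and reproduces the weighted sup terms $[u]^{*}_{i}$ for $0\leq i\leq j$ and, after a case split, the H\"{o}lder term $[u]^{*}_{j,\beta}$, whose sum is $\Vert u\Vert^{*}_{j,\beta}$. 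The delicate point is the weighted difference quotient $d_{x,y}^{j+\beta}\,|D^{\mu}u(x)-D^{\mu}u(y)|/|x-y|^{\beta}$: when $|x-y|\leq\tfrac12 d_{x,y}$ the two points lie in a common ball on which the ball estimate applies directly, whereas when $|x-y|>\tfrac12 d_{x,y}$ I would bound the quotient crudely by $2\,d_{x,y}^{-\beta}\sup|D^{\mu}u|$ and absorb it into the already-controlled weighted sup-norm of $D^{\mu}u$. Checking that the exponents balance in both regimes, so that the $R$-scaling matches the weight exactly, is the main bookkeeping obstacle; once it is verified, iteration over the intermediate orders together with the standard absorption of the $\eps$-terms completes the argument.
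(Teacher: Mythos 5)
The paper does not prove this lemma at all: it is quoted verbatim from Gilbarg--Trudinger \cite[Lemma 6.32]{GT} as a known a priori estimate, so there is no in-paper argument to compare yours against. Your proposal is, in essence, a correct reconstruction of the standard textbook proof: a one-dimensional Landau--Kolmogorov-type estimate, its transfer to balls, and then localization on $B_{d/2}(x_{0})$ with $d = d_{x_{0}}$, where $d_{x} \eqsim d$, together with the near/far dichotomy $|x-y| \leq \frac{1}{2}d_{x,y}$ versus $|x-y| > \frac{1}{2}d_{x,y}$ for the weighted H\"{o}lder quotient. The exponent bookkeeping you flag as the main obstacle does balance: in the near case, with $R = d_{x,y}/2$, multiplying the ball estimate by $d_{x,y}^{j+\beta} = (2R)^{j+\beta}$ gives $C \sup_{\Omega}|u| + \eps\, 2^{j+\beta} R^{k+\alpha}[u]_{k,\alpha;B}$, and $R^{k+\alpha}[u]_{k,\alpha;B} \leq [u]^{*}_{k,\alpha;\Omega}$ because $d_{x',y'} \geq R$ for all $x',y'$ in the ball; in the far case the quotient is bounded by $2^{1+\beta}[u]^{*}_{j;\Omega}$, which the sup-norm part of the argument already controls. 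Two points deserve care when you write this up. First, one-dimensional interpolation along coordinate-parallel segments only controls pure derivatives $\partial_{i}^{j}u$; mixed derivatives must be reached either by polarization (directional derivatives along general, not just axis, directions) or, as your iteration implicitly does, by applying the 1D estimate to a lower-order derivative (e.g.\ to $\partial_{2}u$ in the $x_{1}$-direction) whose sup is in turn interpolated -- so the induction over intermediate orders is genuinely needed there, not merely a convenience. Second, prefer composing the local, ball-by-ball estimates (where every quantity is finite, since the closed balls are compactly contained in $\Omega$) over subtracting $\eps$-terms from both sides of a global inequality: the interior seminorms over all of $\Omega$ need not be finite a priori, and absorption is only legitimate for finite quantities, whereas the pointwise local derivation avoids the issue entirely.
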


\begin{proposition} {\rm (Schauder estimate \cite[Theorem 4.8]{GT}).}
Let $u\in C^{2}(\Omega)$, $f\in C^{\alpha}(\Omega)$ satisfy $\Delta u = f$ in an open set $\Omega \subseteq \RR^{n}$. Then
$$
\left\Vert u \right\Vert^{*}_{2,\alpha;\Omega} \leq 
C \left( \left\Vert u \right\Vert_{0;\Omega} +
\left\Vert f \right\Vert^{(2)}_{0,\alpha;\Omega} \right),
$$
where $C=C(n,\alpha)$ is a constant depending on $n$ and $\alpha$.
\label{prop_schauder}	
\end{proposition}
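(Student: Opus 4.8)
The statement is the classical interior Schauder estimate for Poisson's equation, expressed in the distance-weighted norms of Gilbarg--Trudinger. I would organize the proof into a scale-invariant estimate on a single ball followed by a globalization to the weighted interior norms. The weighted norms $\Vert \cdot \Vert^{*}_{2,\alpha}$ are designed precisely so that an estimate on balls, once rescaled to unit radius and patched together, reproduces the global inequality; so the real content lies in the single-ball estimate, and the passage to $\Omega$ is essentially bookkeeping.

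\emph{Local estimate via the Newtonian potential.} Fix a ball $B = B_{R}(x_{0})$ with $\overline{B} \subseteq \Omega$ and decompose $u = w + h$, where $w(x) = \int_{B} \Phi(x-y) f(y)\, dy$ is the Newtonian potential of $f$ (with $\Phi$ the fundamental solution of $\Delta$ on $\RR^{n}$) and $h = u - w$. Then $\Delta w = f$ in $B$, so $h$ is harmonic. The harmonic part is the easy one: differentiating the mean value property (equivalently the Poisson integral) gives interior derivative bounds $|D^{\beta} h(x_{0})| \leq C R^{-|\beta|} \sup_{B} |h|$ for every multi-index $\beta$, which in particular controls $\Vert D^{2} h \Vert_{0}$ and $[D^{2}h]_{\alpha}$ on the concentric ball $B_{R/2}$. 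Thus the whole problem reduces to estimating $[D^{2}w]_{\alpha}$ in terms of $[f]_{\alpha}$.

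\emph{The main obstacle: the H\"{o}lder bound for the potential.} The difficulty is that $D^{2}\Phi(x) \sim |x|^{-n}$ is not locally integrable, so one cannot differentiate $w$ twice under the integral sign naively. The remedy, and the technical heart of the argument, is to use the H\"{o}lder continuity of $f$ to create cancellation. One first justifies the principal-value representation
$$
D_{ij} w(x) = \int_{B} D_{ij}\Phi(x-y)\,\bigl(f(y)-f(x)\bigr)\, dy - f(x) \int_{\partial B} D_{i}\Phi(x-y)\,\nu_{j}(y)\, dS(y),
$$
where the subtraction of $f(x)$ turns the non-integrable singularity into an integrable one via $|f(y)-f(x)| \leq [f]_{\alpha}\,|x-y|^{\alpha}$. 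To bound the H\"{o}lder seminorm of $D_{ij}w$ I would then take two points $x_{1},x_{2}$, set $\delta = |x_{1}-x_{2}|$, and split the integration domain into the near region $\{|y-x_{1}| \leq 2\delta\}$ and its complement. In the near region each potential is estimated directly using $[f]_{\alpha}$; in the far region one applies the mean value theorem to $D_{ij}\Phi(x_{1}-\,\cdot\,) - D_{ij}\Phi(x_{2}-\,\cdot\,)$, which produces the kernel $D^{3}\Phi \sim |x|^{-n-1}$ together with an extra factor $\delta$, and one exploits the vanishing spherical averages of $D_{ij}\Phi$ to retain convergence. Summing the contributions yields $[D^{2}w]_{\alpha;B_{R/2}} \leq C(n,\alpha)\,[f]_{\alpha;B}$, and the dependence of the constant on $\alpha$ enters precisely through these splitting exponents. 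This is the step I expect to be the main obstacle; the rest is routine.

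\emph{Globalization to the weighted norms.} To assemble the weighted estimate I would apply the single-ball estimate on $B_{d_{x}/2}(x)$ for each $x \in \Omega$. Since $d_{y} \eqsim d_{x}$ for all $y$ in that ball, the distance weights $d_{x}^{k+\sigma}$ appearing in $\Vert \cdot \Vert^{*}$ are comparable throughout, so the rescaled ball estimate becomes a pointwise bound on $d_{x}^{2}|D^{2}u(x)|$ and on the corresponding weighted H\"{o}lder quotient; taking the supremum over $x$ produces the seminorm $[u]^{*}_{2,\alpha;\Omega}$ on the left. The lower-order pieces $\Vert u \Vert^{*}_{0}$ and $\Vert u \Vert^{*}_{1}$ are handled either by the gradient estimate of Proposition \ref{prop_simple_schauder} or, more systematically, by the interpolation inequalities of Lemma \ref{lem_intepolation}, which bound them by $C\Vert u \Vert_{0}$ plus an arbitrarily small multiple of $[u]^{*}_{2,\alpha}$ that can be absorbed into the left-hand side. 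Tracking the constants through the rescaling gives the asserted dependence $C = C(n,\alpha)$.
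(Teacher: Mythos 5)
Your proposal is correct, but note that the paper gives no proof of this proposition at all: it is imported verbatim from Gilbarg--Trudinger \cite{GT} (Theorem 4.8 there), and your outline is exactly the argument of that source --- the Newtonian-potential/harmonic decomposition with the subtraction trick $f(y)-f(x)$ to tame the non-integrable kernel $D^{2}\Phi$, followed by the near/far splitting for the H\"{o}lder seminorm and the scaling bookkeeping that converts the single-ball estimate into the weighted interior norm $\Vert \cdot \Vert^{*}_{2,\alpha}$. Since this is the canonical proof underlying the cited result, your attempt takes essentially the same approach as the paper's (delegated) proof.
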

 
\medskip
With the above estimates we are ready to prove Theorem \ref{thm_2}. In the following proposition we write $x \lesssim y$ if there exists a constant $C \geq 1$ depending on $\alpha$ such that $x \leq C \cdot y$. In case $x \lesssim y$ and $y \lesssim x$ we write $x \eqsim y$.
For a function $f$, an integer $j \geq 0$ and $\sigma \geq 0$ we use the notation $\Vert D^{j}f \Vert^{(\sigma)}_{k,\alpha} = \sup_{|\beta|=j} \Vert D^{\beta}f \Vert^{(\sigma)}_{k,\alpha}$ where $\beta$ is a two-dimensional multi-index.

\begin{proposition}
Let $0 < \alpha < 1$, let $M$ be a $C^{2}$-smooth Riemannian surface, fix a point $p_{0} \in M$ and let $\delta > 0$ be such that the injectivity radius of all points in the Riemannian disc $B = B_{M}(p_{0},\delta)$ is at least $2\delta$.
Suppose that $\delta^{2} \Vert K \Vert_{0,\alpha;B} \leq 0.01$, and let $\varphi$ be the conformal factor corresponding to an isothermal coordinate chart $z : \overline{B} \to \delta \overline{\DD}$ with $z(p_{0}) = 0$ and $z(\partial{B}) = \delta \MBS^{1}$. Then
$$
\left\Vert \varphi - 1 \right\Vert^{*}_{2,\alpha; \delta \DD} \lesssim 
 \delta^{2} \left\Vert K \right\Vert_{0,\alpha; B}.
$$
\label{prop_schauder_corollary}
\end{proposition}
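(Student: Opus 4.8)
The plan is to pass to the logarithm of the conformal factor and read off the estimate from the a-priori bounds for Poisson's equation. Set $u = \log \varphi$ on $\Omega = \delta \DD$ and write $\eta = \delta^{2}\Vert K \Vert_{0,\alpha;B}$, so that $\eta \leq 0.01$ by assumption. Liouville's equation (\ref{eq_curvature_formula}) becomes the Poisson equation $\Delta u = -2\varphi K = -2 e^{u} K$ on $\Omega$. Since $\varphi - 1 = e^{u}-1$ and $u$ will turn out to be small in $\Vert \cdot \Vert^{*}_{2,\alpha}$, the proposition reduces to the claim $\Vert u \Vert^{*}_{2,\alpha;\delta\DD} \lesssim \eta$, after which the passage from $u$ to $\varphi - 1$ is carried out at the very end.

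Before invoking the Poisson estimates I would convert all the data into the Euclidean, boundary-weighted norms of Section \ref{sec_claim_2}. Theorem \ref{thm_isothermal} applies since $\delta^{2}\kappa \leq \eta < \pi^{2}/8$, where $\kappa = \sup_{B}|K|$; it gives $\sup_{\Omega}|u| = \sup_{B}|\log\varphi| \leq 8\delta^{2}\kappa \lesssim \eta$, so that $\varphi$ is pinched between $e^{-8\eta}$ and $e^{8\eta}$ and in particular $\sup_{\Omega}|\varphi| \lesssim 1$ and $\sup_{\Omega}|\varphi - 1| \lesssim \eta$. The same theorem gives the comparison $d_{g} \eqsim |x-y|$ in the isothermal coordinates, with multiplicative constants $e^{\pm 4\delta^{2}\kappa}$ that are within a universal factor of $1$. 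Using this comparison together with the elementary bound $d_{x} \leq \delta$ on $\delta\DD$, the weight $d_{x}^{2+\alpha}$ absorbs the factor $\delta^{2+\alpha}$ coming from the non-dimensional definition of $\Vert K \Vert_{0,\alpha;B}$, and I obtain $\Vert K \Vert^{(2)}_{0,\alpha;\delta\DD} \lesssim \eta$.

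The heart of the argument is a two-step application of the Poisson estimates, arranged so as to avoid any circularity. First I would apply the gradient estimate of Proposition \ref{prop_simple_schauder} to $u$, which yields $\sup_{\Omega} d_{x}|\nabla u| \lesssim \sup_{\Omega}|u| + \sup_{\Omega} d_{x}^{2}|2\varphi K| \lesssim \eta$; since $\nabla \varphi = \varphi \nabla u$ this gives $d_{x}|\nabla \varphi| \lesssim \eta$ as well. From this first-order bound I would control the weighted H\"{o}lder seminorm $[\varphi]^{*}_{0,\alpha} = [\varphi - 1]^{*}_{0,\alpha}$ by splitting into nearby pairs (estimated along the connecting segment by $d_{x}|\nabla\varphi|$) and far pairs (estimated by $2\sup_{\Omega}|\varphi - 1|$), in both cases obtaining $\lesssim \eta$. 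Hence $\Vert \varphi \Vert^{*}_{0,\alpha} = \Vert \varphi \Vert^{(0)}_{0,\alpha} \lesssim 1$. Now the Schauder estimate of Proposition \ref{prop_schauder} applied to $\Delta u = -2\varphi K$, combined with the submultiplicativity (\ref{eq_submul_astrix}) in the form $\Vert \varphi K \Vert^{(2)}_{0,\alpha} \leq \Vert \varphi \Vert^{(0)}_{0,\alpha}\Vert K \Vert^{(2)}_{0,\alpha}$, gives
$$
\Vert u \Vert^{*}_{2,\alpha} \leq C\left( \sup_{\Omega}|u| + \Vert \varphi K \Vert^{(2)}_{0,\alpha} \right) \lesssim \eta + \Vert \varphi \Vert^{(0)}_{0,\alpha}\Vert K \Vert^{(2)}_{0,\alpha} \lesssim \eta .
$$

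The step I expect to be the main obstacle, and the one requiring the most care, is precisely the control of $\Vert \varphi \Vert^{(0)}_{0,\alpha}$ on the right-hand side of the Schauder estimate: a naive bound on $[\varphi]^{*}_{0,\alpha}$ through the interpolation inequalities of Lemma \ref{lem_intepolation} would reintroduce the second-order quantity $[u]^{*}_{2,\alpha}$ that we are trying to estimate, forcing a bootstrap that closes only because $\eta$ is small. Routing this bound through the gradient estimate instead, as above, controls $\Vert \varphi \Vert^{(0)}_{0,\alpha}$ by a universal constant and lets the estimate close in a single pass. It remains to transfer the bound from $u$ to $\varphi - 1 = e^{u}-1$; expanding the exponential and using a product inequality for $\Vert \cdot \Vert^{*}_{2,\alpha}$ analogous to (\ref{eq_submul_astrix}), together with the smallness $\Vert u \Vert^{*}_{2,\alpha} \lesssim \eta$, gives $\Vert \varphi - 1 \Vert^{*}_{2,\alpha;\delta\DD} \lesssim \Vert u \Vert^{*}_{2,\alpha;\delta\DD} \lesssim \eta$, which is the asserted estimate.
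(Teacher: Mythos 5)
Your proposal is correct and follows essentially the same route as the paper's proof: the distance comparison from Theorem \ref{thm_isothermal}, the gradient estimate of Proposition \ref{prop_simple_schauder} applied to $\log \varphi$ to first control $\left\Vert \varphi \right\Vert^{*}_{0,\alpha}$ by a constant, then the Schauder estimate of Proposition \ref{prop_schauder} on Liouville's equation combined with (\ref{eq_submul_astrix}), and finally a transfer from $\log \varphi$ back to $\varphi - 1$ using smallness. The only cosmetic differences are that the paper gets $\left\Vert \varphi \right\Vert^{*}_{0,\alpha} \lesssim 1$ from the interpolation inequalities of Lemma \ref{lem_intepolation} applied between orders $\alpha$ and $1$ (so the circularity you worry about never arises there) instead of your chaining argument, and it performs the final transfer via the identity $\partial_{i}\partial_{j}\varphi = \varphi\,(\partial_{i}\partial_{j}\log\varphi + \partial_{i}\log\varphi \cdot \partial_{j}\log\varphi)$ together with (\ref{eq_submul_astrix}) rather than via the power series of $e^{u}-1$.
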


\begin{proof}
According to Theorem \ref{thm_isothermal} for any $p \neq q \in B$ we have
$$
e^{- 0.04} \leq 
\frac{d_{g}(p,q)}{d_{g_{0}}(z(p),z(q))} 
\leq e^{0.04},
$$
which implies that
$ \Vert K \Vert_{0,\alpha;B} \eqsim \Vert K \Vert_{0,\alpha;\delta \DD}$.
Therefore all of the norms in this proof are implicitly with respect to $\delta \DD$, where the distances are Euclidean, and our goal is to show that 
$ \left\Vert \varphi - 1 \right\Vert^{*}_{2,\alpha} \lesssim \delta^{2} \left\Vert K \right\Vert_{0,\alpha}$.
According to Theorem \ref{thm_isothermal} together with the fact that $|e^{x} - 1| \leq 2|x|$ for $|x| < 1$ we see that 
$ \left\Vert \varphi - 1 \right\Vert_{0} \leq 2 \left\Vert \log \varphi \right\Vert_{0} \leq 16 \cdot \delta^{2} \left\Vert K \right\Vert _{0}$.
It therefore follows from Lemma \ref{lem_intepolation} that it suffices to show that
\begin{equation}
[\varphi]^{*}_{2,\alpha} \lesssim \delta^{2} \left\Vert K \right\Vert_{0,\alpha}.
\label{eq_0659}
\end{equation}
Using Liouville's equation (\ref{eq_curvature_formula}) together with Proposition \ref{prop_simple_schauder} and the fact that $ \Vert \varphi \Vert_{0} \lesssim 1$ we have
\begin{align*}
\sup_{x \in \delta \DD, |\beta| = 1} d_{x} \left| D^{\beta}\varphi(x) \right| 
& \lesssim
\sup_{x \in \delta \DD, |\beta| = 1} d_{x} \left| D^{\beta}\log \varphi(x) \right| 
\lesssim 
\sup_{\delta \DD} \left| \log\varphi \right| + 
\sup_{\delta \DD} d_{x}^{2} \left| K\varphi \right| 
\\& \lesssim
\sup_{\delta \DD} \left| \log\varphi \right| +
\delta^{2} \sup_{\delta \DD} \left| K \right| \sup_{\delta \DD} \left| \varphi \right| \lesssim 1.
\nonumber
\end{align*}
Hence by using Lemma \ref{lem_intepolation} we see that
$ \left\Vert \varphi \right\Vert^{*}_{0,\alpha} \lesssim \left\Vert \varphi \right\Vert^{*}_{1} \lesssim 1$.
According to Proposition \ref{prop_schauder} together with Liouville's equation (\ref{eq_curvature_formula}), Theorem \ref{thm_isothermal} and (\ref{eq_submul_astrix}) we therefore obtain that
\begin{align*}
\left\Vert \log \varphi \right\Vert^{*}_{2,\alpha} 
& \lesssim 
\left\Vert \log \varphi \right\Vert_{0} + 
\left\Vert K \varphi \right\Vert^{(2)}_{0,\alpha}  \leq
\left\Vert \log \varphi \right\Vert_{0} + 
\left\Vert K \right\Vert^{(2)}_{0,\alpha}  \left\Vert \varphi \right\Vert^{*}_{0,\alpha}
\\& \lesssim
\delta^{2} \left\Vert K \right\Vert_{0} + 
\delta^{2} \left\Vert K \right\Vert_{0,\alpha} \left\Vert \varphi \right\Vert^{*}_{0,\alpha} \lesssim \delta^{2} \left\Vert K \right\Vert_{0,\alpha}.
\label{eq_1710}
\end{align*}
For $i,j \in \{ 1,2\}$ we have that
$\partial_{i}\partial_{j}\varphi = 
\varphi \cdot (\partial_{i}\partial_{j}\log\varphi +
\partial_{i} \log\varphi \cdot \partial_{j} \log\varphi)$.
Moreover, it follows from the definitions and Lemma \ref{lem_intepolation} that for $k \in \{1,2 \}$ we have $\left\Vert D^{k}  \log \varphi \right\Vert^{(k)}_{0,\alpha}  \lesssim \left\Vert \log \varphi \right\Vert^{*}_{2,\alpha}$.
Therefore by using (\ref{eq_submul_astrix}) we obtain that
$$
[\varphi]^{*}_{2,\alpha} \leq 
\left\Vert D^{2}\varphi \right\Vert^{(2)}_{0,\alpha} \lesssim
\left\Vert D^{2}\log\varphi \right\Vert^{(2)}_{0,\alpha} + 
\left( \left\Vert D^{1} \log \varphi \right\Vert^{(1)}_{0,\alpha} \right)^{2} \lesssim
\delta^{2} \left\Vert K \right\Vert_{0,\alpha}.
$$
Hence (\ref{eq_0659}) holds true and the proof is completed.
\end{proof}

\begin{proof} [Proof of Theorem \ref{thm_2}]
Suppose that $M$ is intrinsically $\alpha$-regular at a point $p_{0} \in M $ to distance $\delta > 0$, i.e. the injectivity radius of all points of $B = B_{M}(p_{0},\delta)$ is at least $2\delta$ and 
$ \delta^{2} \left\Vert K \right\Vert_{0,\alpha;B} \leq 1$. 
Let $\delta_{0} = \delta /10 \sqrt{C_{0}}$ where $C_{0} \geq 1$ is the implied constant depending on $\alpha$ from Proposition \ref{prop_schauder_corollary}, and write $B_{0} = B_{M}(p_{0},\delta_{0})$. 
Then $\delta_{0}^{2} \Vert K \Vert_{0,\alpha;B_{0}} \leq 0.01 \cdot \delta^{2} \Vert K \Vert_{0,\alpha; B} \leq 0.01$.
Let $z_{0}:\overline{B_{0}} \to \delta_{0}\overline{\DD}$ be an isothermal coordinate chart such that $z_{0}(p_{0}) = 0$ and $z_{0}(\partial{B_{0}}) = \delta_{0} \MBS^{1}$ with conformal factor $\varphi_{0}$. According to Proposition \ref{prop_schauder_corollary} we have that
\begin{equation}
\left\Vert \varphi_{0} - 1 \right\Vert^{*}_{2,\alpha;\delta_{0}\DD} \leq C_{0} \delta_{0}^{2} \Vert K \Vert_{0,\alpha;B_{0}} \leq 
0.01 \cdot \delta^{2} \left\Vert K \right\Vert_{0,\alpha;B} \leq 0.01.
\label{eq_0223}
\end{equation}

\medskip
Consider $\delta_{1} = \delta_{0} / 3$, the Riemannian disc $B_{1} = B_{M}(p_{0},\delta_{1})$ and the isothermal coordinate map $z_{0} |_{B_{1}} : B_{1} \to U$, where $U \subseteq \delta_{0} \DD$ is some open set.
According to Theorem \ref{thm_isothermal} for any $q \in \partial{B_{1}}$ we have $|z_{0}(q)| \leq e^{0.04} \cdot \delta_{1} < \delta_{0}/2$.
Thus we see that $\text{dist}(U, \delta_{0} \MBS^{1}) > \delta_{0}/2$ and $d = \text{diam}(U) \leq \delta_{0}$. 
Writing $d_{x} = \text{dist}(x,\delta_{0} \MBS^{1})$ we obtain that $d \leq 2d_{x}$ for any $x \in U$. Hence
$$
d^{k} \left\Vert D^{k}\varphi_{0} \right\Vert_{0;U} \leq 
4 \cdot \sup_{x \in U,|\beta|=k} d_{x}^{k} \cdot |D^{\beta}\varphi_{0}(x)|
\leq 4 \cdot [\varphi_{0}]^{*}_{k,0;\delta_{0}\DD}
\qquad \text{for any } k \in \{1,2\}.
$$
Similarly, writing $d_{x,y} = \min(d_{x},d_{y})$,
$$
d^{2+\alpha}[D^{2}\varphi_{0}]_{\alpha;U} \leq
2^{2+\alpha} \cdot \sup_{x \neq y \in U,|\beta|=2} d_{x,y}^{2+\alpha} \frac{|D^{\beta}\varphi_{0}(x)-D^{\beta}\varphi_{0}(y)|}{|x-y|^{\alpha}}
\leq 
2^{2+\alpha} \cdot [\varphi_{0}]^{*}_{2,\alpha;\delta_{0}\DD}.
$$
Using (\ref{eq_0223}) we therefore obtain that
$ \left\Vert \varphi_{0} - 1 \right\Vert_{2,\alpha;U} \leq 8 \cdot \left\Vert \varphi_{0} - 1 \right\Vert^{*}_{2,\alpha;\delta_{0}\DD} \leq 0.08$. 
All that remains is to ``fix'' the value of the conformal factor at the origin, i.e. to make it equal to one.
Let $\xi = \sqrt{\varphi_{0}(0)}$, $V = \xi U$ and set $z_{1} = \xi z_{0}|_{B_{1}} : B_{1} \to V$. The isothermal coordinate map $z_{1}$ has the conformal factor $\varphi_{1}(z) = \xi^{-2}\varphi_{0}(z/\xi) $, and
\begin{align}
\left\Vert \varphi_{1} - 1 \right\Vert_{2,\alpha;V} &\leq 
\left\Vert \varphi_{1} - \xi^{-2}\right\Vert_{2,\alpha;V} + 
\left| 1-\xi^{-2} \right|  = 
\xi^{-2} \left\Vert \varphi_{0} - 1 \right\Vert_{2,\alpha;U} + 
\left| 1-\xi^{-2} \right| \nonumber \\&\leq 
0.08 \cdot \xi^{-2} + |1 - \xi^{-2}| < 1 .
\label{eq_1708}
\end{align}
Hence by taking $C = 1 /30\sqrt{C_{0}}$, which is a constant depending on $\alpha$, we have that $\delta_{1} = C\delta$ and the Riemannian disc $B_{M}(p_{0},\delta_{1})$ is isometric to a bounded open set $V \subseteq \RR^{2}$ endowed with the metric $\varphi_{1}|dz_{1}|^{2}$ where $\varphi_{1}(0) = 1$ and (\ref{eq_1708}) holds true. Therefore $\rho_{\text{ext}} \geq C \cdot \rho_{\text{int}}$ and the proof is completed.
\end{proof}

\medskip
\noindent Department of Mathematics, Weizmann Institute of Science, Rehovot 76100, Israel. \\
{\it e-mail:} \verb"matan.eilat@weizmann.ac.il"

\end{document}